\newtheorem{theorem}{Theorem}
\newtheorem{lemma}[theorem]{Lemma}
\newtheorem{corollary}[theorem]{Corollary}
\newcommand\RR{\ensuremath{\mathbb{R}}}
\newcommand\NN{\ensuremath{\mathbb{N}}}
\newcommand\Val{\mathop{{\hbox{\sc Val}}}}
\newcommand\ValC{\underline{\Val}_{\,C}}
\newcommand\ValR{\overline{\Val}_{R}}
\begin{document}

\title{Min-max theorem for the game of Cops and Robber on geodesic spaces}

\author{Bojan Mohar\thanks{Supported in part by the NSERC Discovery Grant R611450 (Canada),
and by the Research Project J1-2452 of ARRS (Slovenia).}%
~\thanks{On leave from IMFM, Department of Mathematics, University of Ljubljana.}\\
\small Department of Mathematics\\[-0.8ex]
\small Simon Fraser University\\[-0.8ex]
\small Burnaby, BC \ V5A 1S6, Canada\\
\small\tt mohar@sfu.ca}

\date{}

\maketitle

\begin{abstract}
The game of Cops and Robber is traditionally played on a finite graph. The purpose of this note is to introduce and analyze the game that is played on an arbitrary geodesic space. The game is defined in such a way that it preserves the beauty and power of discrete games played on graphs and also keeps the specialties of the pursuit-evasion games played on polyhedral complexes. It is shown that the game can be approximated by finite games of discrete type and as a consequence a min-max theorem is obtained.
\end{abstract}

%\noindent
%Keywords:

\section{Introduction}

Pursuit-evasion games have a long history, especially in the setup of \emph{differential games} \cite{Is65,Ji15,Le94,Pa70,Pe93}. Differential games with more pursuers were also introduced in the 1970s, see, e.g. \cite{HaBr74,Ch76,Ps76,LePa85} or a more recent paper \cite{FeIbAlSa20} and references therein. More recent important application is design of robot movement in complicated environment, see e.g. \cite{AHRW17}. A more general class of such games played on finite graphs has been devised in discrete setting. Nowakowski and Winkler \cite{NoWi83} and Quilliot \cite{Qui78} independently introduced the game of Cop and Robber that is played on a (finite) graph. Aigner and Fromme \cite{AiFr84} extended the game to include more than one cop. For each graph $G$ and a positive integer $k$, the \emph{Cops and Robber game} on $G$, involves two players. The first player controls $k$ \emph{cops} placed at the vertices of the graph, and the second player controls the \emph{robber}, who is also positioned at some vertex. While the players alternately move to adjacent vertices (or stay at their current position), the cops want to catch the robber and the robber wants to prevent this ever to happen. The main question is how many cops are needed on the given graph $G$ in order that they can guarantee the capture. The minimum such number of cops is termed as the \emph{cop number} $c(G)$ of the graph.

The game of cops and robbers gained attention because of its ties with structural graph theory. Classes of graphs that can be embedded in a surface of bounded genus \cite{AiFr84} and those that exclude some fixed graph as a minor \cite{An86} have bounded cop number. In particular, all graphs that can be embedded in the plane have cop number at most 3 \cite{AiFr84}. We refer to the monograph by Bonato and Nowakowski \cite{BoNo11} for further details about the history of the game and for overview of the main results.

One of our aims is to introduce the game in a more general setup of geodesic metric spaces and study the relationship between the cop number and the topology and geometry of the geodesic space.

The famous Lion and Man problem that was proposed by Richard Rado in the late 1930s and discussed in Littlewood's Miscellany \cite{Li53,Li86} is a version of the game with one pursuer (the Lion) and one evader (the Man). The man and the lion are within a circular arena (unit disk in the plane) and they run with equal maximum speed. It seems that in order to avoid the lion, the man would choose to run on the boundary of the disk. A simple argument then shows that the lion could always catch the man by staying on the segment joining the center of the disk with the point of the man and slowly approaching him. However, Besicovitch proved in 1952 (see \cite[pp.~114--117]{Li86}) that the man has a simple strategy, in which he will approach but never reach the boundary, that enables him to avoid capture forever no matter what the lion does.\footnote{The game defined in this paper allows the use of Besicovitch strategy for the man, so this example shows that the lion is able to come arbitrarily close to the man, but can never catch him.} More details can be found in~\cite{BoLeWa12}.

One can prove that two lions are enough to catch the man in a disk. A recent work by Abrahamsen et al.\ \cite{AHRW17,AHRW20} discusses the game with many lions versus one man in an arbitrary compact subset of the plane whose boundary consists of finitely many rectifiable simple closed curves and prove that three lions can always get their prey. They also discuss the game when the man is just slightly faster than lions, and find some surprising conclusions.

The game of cops and robbers can be defined on any metric space.
However, it is far from obvious how such a game can be defined in order to be natural, resembling interesting examples and allowing for powerful mathematical tools. Subtleties of the various versions of the game are nicely outlined in an influential paper by Bollob\'as, Leader, and Walters \cite{BoLeWa12}, who were the first to provide a general setup for such a game.

In this article we discuss the game of cops and robbers on arbitrary geodesic spaces (see Section \ref{sect:geodesic spaces} or \cite{BuBuIv01,BuSh04} for definitions). We come up with a version of the game that is somewhat different from the game version in \cite{BoLeWa12}, but preserves all the beauty and power of discrete games played on graphs.
Moreover, our version keeps the characteristics of the pursuit-evasion games played in a continuous setting and for instance allows for using strategies similar to that of Besicovitch in the case of the Man and Lion game. It is shown that our game can be approximated by finite games of discrete type and as a consequence we are able to prove the min-max theorem.

\section{Intrinsic metric in geodesic spaces}
\label{sect:geodesic spaces}

We consider a metric space $(X,d)$ and the corresponding metric space topology on $X$. For $x,y\in X$, an \emph{$(x,y)$-path} is a continuous map $\gamma: I\to X$ where $I=[0,1]$ is the unit interval on $\RR$ and $\gamma(0)=x$ and $\gamma(1)=y$.
We allow the paths to be parametrized differently and in particular we can replace $I$ with any finite interval on $\RR$.
The space is \emph{path-connected} if for any $x,y\in X$, there exists an $(x,y)$-path connecting them.

One can define the \emph{length} $\ell(\gamma)$ of the path $\gamma$ by taking the supremum over all finite sequences $0=t_0<t_1<t_2< \cdots < t_n=1$ of the values $\sum_{i=1}^n d(\gamma(t_{i-1}),\gamma(t_i))$. Note that $\ell(\gamma)$ may be infinite; if it is finite, we say that $\gamma$ is \emph{rectifiable}. Note that the length of any $(x,y)$-path is at least $d(x,y)$.  The metric space $X$ is a \emph{geodesic space} if for every $x,y\in X$ there is an $(x,y)$-path $\gamma$ whose length is equal to $d(x,y)$.

An $(x,y)$-path $\gamma$ is \emph{isometric} if $\ell(\gamma) = d(x,y)$. Observe that for $0\le t < t' \le 1$ the subpath $\gamma|_{[t,t']}$ is also isometric. Therefore the set $\gamma(I) = \{\gamma(t)\mid t\in I\}$ is an isometric subset of $X$. With a slight abuse of terminology, we say that the image $\gamma(I)\subset X$ is an \emph{isometric path} in $X$.

A path $\gamma$ is a \emph{geodesic} if it is locally isometric, i.e., for every $t\in [0,1]$ there is an $\varepsilon>0$ such that the subpath $\gamma|_J$ on the interval $J = [t-\varepsilon,t+\varepsilon]\cap[0,1]$ is isometric. A path with $\gamma(0)=\gamma(1)$ is called a \emph{loop} (or a \emph{closed path}). When we say that a loop is a geodesic, we mean it is geodesic as a path and it is also locally isometric around its base point, i.e. $\gamma|_{[1-\varepsilon,1]\cup[0,\varepsilon]}$ is isometric for some $\varepsilon>0$.

Alternatively, one can consider any path-connected compact metric space $X$ and then define the shortest path distance. For $x,y\in X$, the \emph{shortest path distance} from $x$ to $y$ is defined as the infimum of the lengths of all $(x,y)$-paths in $X$. If any two points in $X$ are joined by a path of finite length, then the shortest path distance gives the same topology on $X$. By the Arzel\`a-Ascola theorem (see e.g. \cite{BuBuIv01}), compactness implies that any sequence of $(x,y)$-paths contains a point-wise convergent subsequence, and that the limit points determine an $(x,y)$-path. This implies that there is a path whose length is equal to the infimum of all path lengths. Hence, for this metric, which is also known as the \emph{intrinsic metric}, $X$ is a geodesic space.

If $X$ is a geodesic space, each of its points appears on a geodesic. But some points only appear as the end-points of isometric paths in $X$ and cannot appear as interior points of those. Such points are called \emph{corners}. All other points appear as internal points on geodesics in $X$ and are said to be \emph{regular points} in $X$. It is obvious that regular points are dense in $X$. On the other hand, the set of corners can also be very rich. It may contain the whole boundary component, but in the interior of $X$, it is totally path-disconnected in the sense that every path containing only corners is either trivial (a single point), or is contained in $\partial X$.

Common examples of geodesic spaces include any connected cell complex endowed with the intrinsic metric.
If a geodesic space is homeomorphic to a 1-dimensional cell complex (graph), then we say that it is a \emph{metric graph}.
If $G$ is a graph and $w:E(G)\to \RR_+$ is a function specifying the length of each edge, we define the \emph{metric graph} $X(G,w)$ corresponding to $G$ and $w$ as the metric graph $G$ in which each edge $e$ is represented by a real interval of length $w(e)$.

We refer to \cite{BuBuIv01} and \cite{BuSh04} for a thorough treatment of geodesic spaces.

\section{Game of Cops and Robber on geodesic spaces}

\subsubsection*{Rules of the game}

Let $X$ be a compact geodesic space endowed with intrinsic metric $d$, and let $k\ge1$ be an integer. A \emph{Game of Cops and Robber} on the \emph{game space} $X$ with $k$ cops is a two-person game with complete information defined as follows. The first player controls an avatar, who is positioned at a point $r\in X$ and whom we call the \emph{robber}. The second player controls a set of $k$ \emph{cops} $C_1,\dots,C_k$ that are also positioned in $X$. It is allowed that different cops occupy the same position in $X$. There are rules how the game starts and how the players move, and the goal of the second player is to come as close as possible to the robber (possibly catching him, i.e. one of the cops to occupy the same point in $X$ as the robber). The details about these rules are specified below.

A \emph{position} in a game with $k$ cops is a $(k+1)$-tuple $(r,c_1,\dots,c_k)\in X^{k+1}$ enlisting the positions of the robber and the cops. Instead of $(r,c_1,\dots,c_k)$ we also write $(r,c)$, where $c=(c_1,\dots,c_k)\in X^k$. The game is defined by the following parameters (in addition to the game space $X$ and $k$):

\begin{description}
  \item[(I)] A rule that specifies the set of \emph{admissible initial positions} of the robber and the cops. This is just a set of $(k+1)$-tuples, $\mathcal Y^0 \subseteq X^{k+1}$.
  \item[(S)] A set $\Sigma^0$ of \emph{agility functions}, each of which maps $\NN\to\RR_+$.
\end{description}
The \emph{standard game} $\Gamma_0$ has $\mathcal Y^0 = X^{k+1}$ and $\Sigma^0$ contains all positive functions $\tau:\NN\to\RR_+$ for which $\sum_{n\ge1} \tau(n) = \infty$. Throughout this paper we will stick with these assumptions unless stated differently.

Given (I) and (S), the robber selects an initial position $Y^0 = (r^0,c_1^0,\dots,c_k^0) \in \mathcal Y^0$ and selects his agility $\tau\in \Sigma^0$.
Then the game proceeds as a discrete game in consecutive steps. Having made $n-1$ steps $(n\ge1)$, the players are in position $(r^{n-1},c_1^{n-1},\dots,c_k^{n-1})\in X^{k+1}$. The $n$th step will have its duration determined by the agility: the move will last for time $\tau(n)$, and each player can move with unit speed up to a distance at most $\tau(n)$ from his current position.
First, the robber moves to a point $r^n\in X$ at distance at most $\tau(n)$ from its current position, i.e. $d(r^{n-1}, r^n)\le \tau(n)$. The destination $r^n$ is revealed to the cops. Then each cop $C_i$ ($i\in[k]$) selects his new position $c_i^n$ at distance at most $\tau(n)$ from its current position, i.e. $d(c_i^{n-1}, c_i^n)\le \tau(n)$. The game stops if $c_i^n = r^n$ for some $i\in[k]$. In that case, the \emph{value of the game} is 0 and we say that the cops \emph{have caught} the robber. Otherwise the game proceeds with the next step. If it never stops, the \emph{value of the game} is
\begin{equation}\label{eq:value of game}
v = \inf_{n\ge0} \min_{i\in[k]} d(r^n, c_i^n).
\end{equation}
If the value is 0, we say that the \emph{cops won} the game; otherwise the \emph{robber wins}. Note that the cops can win even if they never catch the robber.\footnote{Consider the afore-mentioned strategy of Besicovitch \cite{BoLeWa12} for the game of Lion and Man.}

The traditional description of pursuit-evasion games starts by the cops first choosing their position and then the robber choosing his. This setting is actually equivalent to our standard game, since the cops can always move to their desired initial positions during the beginning of the game.

We will also consider other variants of the game:
\begin{itemize}
  \item
  The \emph{game $\Gamma(r,c)$ with the fixed initial position $(r,c)$}.  Here we have $\mathcal Y^0 = \{(r,c)\}$.
  \item
  The \emph{game $\Gamma(\tau)$ with the fixed agility} $\tau$, and its version where also the initial position is fixed, $\Gamma(r,c,\tau)$. Here we have $\Sigma^0 = \{\tau\}$ (and $\mathcal Y^0 = \{(r,c)\}$), and we allow that $T = \sum_{n\ge1} \tau(n)$ is either finite or infinite.
  \item A \emph{finite $N$-step game} $\Gamma(N,\tau)$ and its version with the initial position fixed, $\Gamma(r,c,N,\tau)$. The game stops after $N$ steps. Here, only the first $N$ values of $\tau$ are important for the game, so we may assume that $\tau: [N] \to \RR^+$ or that $\tau(n)=0$ for $n > N$.
  \item A \emph{finite-time game} $\Gamma(T)$ or $\Gamma(r,c,T)$ is a finite-step game, where the constraint is not the number of steps but the total duration, i.e., the agility functions satisfy $\sum_{n\ge1} \tau(n) = T$. Here $T$ is a positive real number, the \emph{duration of the game}. This version may be combined with fixing the number of steps, $\Gamma(N,T)$ or even fixing the agility, $\Gamma(N,\tau,T)$, where we ask $\sum_{n=1}^N \tau(n) = T$; additionally, we can fix the initial position.
\end{itemize}

It will always be clear by the listed set of parameters which version of the game we have in mind.

In general, we should also add the game space $X$ and the number $k$ of cops among the parameters, but since this is almost always implicit from the context, we usually omit them.

\subsubsection*{Strategies and value of the game}

The value of the game when played in $X$ is defined by (\ref{eq:value of game}). Note that the strategy of a player depends not only on the current position but also on the agility chosen by the robber at the very beginning. To formalize this dependence, we introduce the notion of a strategy.

The strategy of the robber is first to select an initial position and agility. This is a formal part of his strategy. The rest of his strategy and a strategy of the cops may be defined via a game with a fixed initial position and fixed agility, $\Gamma(r,c,\tau)$. Formally (and leaving out the initial choice of the robber), a \emph{strategy of the robber} is a function $s: (r,c,\tau) \mapsto r'$, such that $d(r,r')\le \tau(1)$. This can be interpreted as moving the robber from $r$ to $r'$ along some geodesic of length $d(r,r')$. Then, each cop $C_i$ moves from his current position $c_i$ to a point $c_i'$ at distance at most $\tau(1)$ from $c_i$. The choice of such destinations $c' = (c_1',\dots,c_k')$ constitutes a \emph{strategy of cops}. Formally, it is a function $q: (r',c,\tau) \mapsto c'$. Performing the moves determined by both strategies gives the new game $\Gamma(r',c',\delta\tau)$ with the fixed position and agility, where $\delta\tau(n) := \tau(n+1)$ ($n\ge1$).

Given the agility $\tau$ and strategies $s,q$ of the robber and the cops, we denote by $v_\tau(s,q)$ the value of the game when it is played using these strategies. Now we define the \emph{guaranteed outcome} for each of the players. First for the robber:
$$
    \ValR(\tau) = \inf_q \sup_s v_\tau(s,q) \quad \textrm{and} \quad  \ValR = \sup_\tau \ValR(\tau).
$$
Similarly for the cops,
$$
    \ValC(\tau) = \sup_s \inf_q v_\tau(s,q) \quad \textrm{and} \quad  \ValC = \sup_\tau \ValC(\tau).
$$
For each $\varepsilon>0$, there is $q$ such that for every $s$, $v_\tau(s,q) < \ValR(\tau)+\varepsilon$. This implies that $$\ValC(\tau) \le \ValR(\tau) \quad \textrm{and} \quad  \ValC \le \ValR.$$
If $\ValC = 0$, then we say that \emph{cops win} the game. If $\ValR>0$, then the \emph{robber wins}.

It is an interesting question whether it can happen that $\ValC < \ValR$ for some game space $X$ and some $k$. In particular, is it possible that both players, the cops and the robber win the game? This question was offered as the main open problem in the afore-mentioned work by Bollob\'as et al.~\cite{BoLeWa12}. For our version of the game, we will answer this question in the negative. Indeed, the subtleties of our definition will allow us to make the conclusion that $\ValC = \ValR$, see Theorem \ref{thm:ValC=ValR}.

Since $X$ is compact, for every $\varepsilon>0$ there exists an integer $k$ such that $k$ cops can always achieve the value of the game be less than $\varepsilon$. (Place the cops at the centers of open balls of radius $\varepsilon$ that cover $X$. Then, no matter where the robber is, he will be at distance less than $\varepsilon$ from one of the cops.) Hence, with the growing number of cops, the value of the game tends to 0.

Given a game space $X$, let $k$ be the minimum integer such that $k$ cops win the game on $X$. This minimum value will be denoted by $c(X)$ and called the \emph{cop number} of $X$. If such a $k$ does not exist, then we set $c(X)=\infty$. Similarly we define the \emph{strong cop number} $c_1(X)$ as the minimum $k$ such that $k$ cops can always catch the robber.

\subsubsection*{Examples:}

{\bf 1. The $n$-ball.}
Let us consider the game with one cop on the $n$-dimensional ball of radius~$1$, $X = B^n = \{x\in \RR^n \mid \Vert x \Vert_2 \le 1\}$. This is the higher-dimensional analogue of the game of Man and Lion. It turns out that the ball of any dimension has cop number 1, i.e., one cop can win the game (although the robber can make sure he is never caught). In this game, the cop can use a strategy from the Man and Lion game (the 2-dimensional version) as follows. First, the cop moves to the center of the ball. From now on he will make sure to always be at the line segment from the center to the current position of the robber. By considering the 2-dimensional plane $\Pi$ through the origin, containing the former and the new position of the robber, he can keep this strategy and approach the robber using the 2-dimensional strategy in $\Pi$ (in which the cop keeps the requirement to be on the line from the center to the robber), thus approaching the robber and achieving infimum of his distance to the robber to be 0.

Note that a slight modification of the described strategy of the cop can be defined so that it only depends on the positions of the players and the current step length, i.e., it does not need the whole information on agility. It goes as follows: If the step duration is $t$ and the cop can reach a point on the segment from the center to the robber, he moves to the point on this segment that is closest to the robber. If he cannot reach the segment in the current step, then he moves distance $t$ towards the center.

The above strategy shows that $c(B^n) = 1$. It can be shown that $c_1(B^n) = n$, see \cite{Croft64,IrMo22}.

\medskip

\noindent
{\bf 2. The $n$-sphere.}
The game with one cop played on the $n$-dimensional sphere of radius $1$ in $\RR^{n+1}$, $S^n = \{x\in \RR^{n+1} \mid \Vert x \Vert_2 = 1\}$ is somewhat different. Here the robber may invoke the following strategy. Let $\tau_\varepsilon$ be the agility in which each step has length $\varepsilon$. The robber can choose this agility and select the initial position of the cop to be at the north pole, while he positions himself at the south pole. At the first step, the robber stays put. Then, the cop moves, and in the next step, the robber can move to the point that is antipodal to the position of the cop. It is easy to see that the minimum distance between the cop and the robber is never below $\pi-\varepsilon$, so the value of the standard game is $\pi$ in this case.

Differential pursuit-evasion game on $S^n$ was studied by Satimov and Kuchkarov \cite{SaKu00}, who proved that $n+1$ cops can catch the robber on $S^n$ and that $n$ cops cannot catch him. Our version of the game has the same outcome, i.e. $c_1(S^n) = n+1$, see \cite{IrMo22}. However, an interesting fact from \cite{IrMo22} is that two cops can win the game, i.e. $c(S^n) = 2$.

\medskip

\noindent
{\bf 3. The cylinder.}
Let $B$ be a game space (a compact geodesic space with intrinsic metric $d$). The \emph{cylinder over $B$} is the geodesic space $X=B\times I$, endowed with the product topology, where $I=[0,1]$ is a real interval of length 1. We consider the $\ell_p$-metric $d_p$ for some $p\ge1$:
$$
    d_p((a,s),(b,t)) = \left(d(a,b)^p + |s-t|^p\right)^{1/p}.
$$
Clearly, $X$ is a geodesic space. For any number $k$ of cops, the value of the game of Cops and Robber on $X$ is the same as on $B$ if $p>1$: $\ValR(X) = \ValR(B)$.
To see this, let us consider any agility $\tau$ and for any $\varepsilon>0$, consider a strategy $s_0$ of the robber in $B$ such that
$$
    \ValC(B,\tau) = \sup_s \inf_q v_\tau(s,q) \le \inf_q v_\tau(s_0,q) + \varepsilon.
$$
The robber can use the same agility and the same strategy on $X$, if he always stays in $B \approx B\times \{0\}$ and considers the cops' positions as being in $B$ by projecting them to the first coordinate. By using the strategy $s_0$, he will be able to keep distance $v_\tau(s_0,q)$ from the cops, where $q$ is the projection of cops' strategy to $B$. Since $\varepsilon$ is arbitrarily small, this shows that $\ValC(B,\tau)\le \ValC(X,\tau)$, and since $\tau$ is arbitrary, it also implies that $\ValC(B)\le \ValC(X)$.

Similarly we see that $\ValR(B)\ge \ValR(X)$. Here, the cops will use a strategy in $B$ to get close to the robber in $B \approx B\times \{0\}$. Once one of the cops is at distance less than $\varepsilon$ from the projection of the robber's position onto $B$, then that cop follows the moves of the robber in the $B$-coordinate and slowly increases its $I$-coordinate with a constant slope, so that after making distance $t$, his $I$-coordinate would have changed by 1, and the length $t$ of his path will be at most $\varepsilon$ larger than the length $t_0$ of the projected path. For instance, we can take $t$ large enough so that $t-(t^p-1)^{1/p} < \varepsilon$. Such $t$ exists since $p>1$. It is easy to see that in this way, the cop will come to a distance less than $2\varepsilon$ from the robber. Again, since $\varepsilon$ is arbitrarily small and $\tau$ is arbitrary, we conclude that $\ValR(B)\ge \ValR(X)$. Both inequalities show that
$\ValC(B) \le \ValC(X) \le \ValR(X) \le \ValR(B)$.
Finally, our Theorem \ref{thm:ValC=ValR} implies that
$$
   \ValC(B) = \ValC(X) = \ValR(X) = \ValR(B).
$$

The same result holds if instead of $X=B\times I$ we consider the product $B\times T$, where $T$ is any metric graph homeomorphic to a tree.

\section{Approximation with a finite game}

The analysis of the game of cops and robber as defined in this article is somewhat complicated due to the fact that the game may have infinitely many steps and that the set of agility functions is not compact. However, the game can be approximated by a finite game within an arbitrary precision. This approximation result is described next.

Suppose that we fix an initial position $(r,c)\in X^{k+1}$, a positive integer $N$ and an agility $\tau$. Then we can consider $N$ steps of the game, and let $T=T_N(\tau)=\sum_{i=1}^N \tau(i)$ be the duration of the game during these $N$ steps. We will use $\tau^N$ to denote the restriction of $\tau$ to the first $N$ values since the rest of $\tau$ is not important for the finite game. This means that we either view $\tau^N$ as a function $[N]\to\RR_+$ or as a function that has $\tau^N(n)=0$ for every $n>N$. We also define $d(r,c) = \min\{d(r,c_i)\mid i\in [k]\}$, $d(c,c') = \max\{d(c_i,c_i')\mid i\in [k]\}$, and $d((r,c),(r',c')) = \max\{d(r,r'),d(c,c')\}$. Given $\tau$, we define its \emph{shift} $\tau_1=\delta\tau$ by the rule $\tau_1(i)=\tau(i+1)$ for $i\ge1$.

For each game $\Gamma(*)$ (where $(*)$ is the defining set of parameters), we will define the value of the game $\Val(\Gamma(*))$, which we will abbreviate by writing simply $\Val(*)$.

Given the finite game $\Gamma(r,c,N,\tau^N)$ with initial position $(r,c)$, its \emph{value} can be defined recursively as follows:
\begin{equation}\label{eq:def_value_finite_game}
  \Val(r,c,N,\tau^N) =
    \left\{
      \begin{array}{ll}
        d(r,c), & \hbox{if $N=0$;} \\[1.5mm]
        \max\limits_{\substack{r'\\d(r,r')\le\tau(1)}}~~\min\limits_{\substack{c'\\d(c,c')\le\tau(1)}} \Val(r',c',N-1,\tau_1^{N-1}), & \hbox{otherwise.}
      \end{array}
    \right.
\end{equation}
The reader will realize that we used maximum and minimum (instead of supremum and infimum) in the definition; this is justified since $X$ is compact and the value of the game is continuous. This fact is formally stated and proved as Lemma \ref{lem:value_is_continuous} below.
Definition (\ref{eq:def_value_finite_game}) then shows that both players have optimal strategies that assure them achieving the value $\Val(r,c,N,\tau^N)$ if they stick with these strategies. Let $s_0: (r,c,N,\tau^N) \mapsto r'$, where $r'$ is the position of the robber for which the maximum in (\ref{eq:def_value_finite_game}) is attained; and for each $r'$, let $q_0: (r',c,N,\tau^N) \mapsto c'$, where $c'$ is a cops' position at which the minimum in (\ref{eq:def_value_finite_game}) is attained. Note that for any strategies $s$ and $q$ of the robber and the cops (respectively), we have:
$$
    v_\tau(s,q_0) \le v_\tau(s_0,q_0) = \Val(r,c,N,\tau^N) \le v_\tau(s_0,q).
$$

There is another small detail about the definition of the value $\Val(r,c,N,\tau^N)$ in (\ref{eq:def_value_finite_game}). Namely, (\ref{eq:def_value_finite_game}) defines the value as the distance $d(r^N,c^N)$ at the end of the game, while the definition of $v_\tau(s,q)$ also considers intermediate distances. This means that the definition should have been introduced as
\begin{equation}\label{eq:def_value_finite_game_intermediate}
  \Val\nolimits'(r,c,N,\tau^N) =
    \left\{
      \begin{array}{ll}
        d(r,c), & \hbox{if $N=0$;} \\[1.5mm]
        \min \bigl\{ d(r,c),\ \max\limits_{r'}~\min\limits_{c'}~ \Val(r',c',N-1,\tau_1^{N-1}) \bigr\}, & \hbox{otherwise}
      \end{array}
    \right.
\end{equation}
where the maximum and the minimum are taken over the same nearby points $r'$ and $c'$ as in (\ref{eq:def_value_finite_game}).
However, this definition is equivalent as shown below.

\begin{lemma}\label{lem:L1}
  For any finite-step game $\Gamma(r,c,N,\tau^N)$, we have $\Val(r,c,N,\tau^N) = \Val'(r,c,N,\tau^N)$.
\end{lemma}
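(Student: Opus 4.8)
The plan is to prove the claim by induction on $N$, comparing the two recursions \eqref{eq:def_value_finite_game} and \eqref{eq:def_value_finite_game_intermediate} step by step. The key structural fact I would isolate first is a monotonicity observation: in one move each player can travel at most $\tau(1)$, so the new robber position $r'$ satisfies $d(r,r')\le\tau(1)$ and each new cop position $c_i'$ satisfies $d(c_i,c_i')\le\tau(1)$; hence by the triangle inequality $d(r',c_i') \ge d(r,c_i) - 2\tau(1)$, but more importantly I want the reverse-type bound that lets the robber \emph{preserve} distance. Concretely, if at the start of a step the robber is at distance $d(r,c)$ from the nearest cop, the robber can (for instance by staying put, since $\Sigma^0$ allows arbitrarily small steps and in any case $r'=r$ is always admissible) guarantee that the distance at the end of the current move is at least $\min\{d(r,c), \text{(whatever the continuation yields)}\}$; this is exactly what makes the extra ``$\min$ with $d(r,c)$'' in \eqref{eq:def_value_finite_game_intermediate} redundant.

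In more detail, I would argue as follows. For $N=0$ both definitions equal $d(r,c)$, so the base case is immediate. For the inductive step, assume $\Val(r',c',N-1,\tau_1^{N-1}) = \Val'(r',c',N-1,\tau_1^{N-1})$ for all positions $(r',c')$; I must show
$$
 \max_{r'}\min_{c'} \Val(r',c',N-1,\tau_1^{N-1})
 \;=\; \min\Bigl\{ d(r,c),\ \max_{r'}\min_{c'} \Val(r',c',N-1,\tau_1^{N-1}) \Bigr\},
$$
where the max/min range over the admissible nearby points. The inequality ``$\ge$'' is trivial once we know the left side is at most $d(r,c)$: indeed $\min\{a,b\}\le b$ always. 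So the whole content is the inequality
$$
 \max_{r'}\min_{c'} \Val(r',c',N-1,\tau_1^{N-1}) \;\le\; d(r,c).
$$
To see this, take any admissible robber move $r'$ (so $d(r,r')\le\tau(1)$) and consider the particular cops' response in which every cop moves straight toward the robber's \emph{old} position $r$ along a geodesic — more simply, the response $c'=c$ (staying put), which is always admissible since $0\le\tau(1)$. With this response the nearest-cop distance becomes at most $d(r',c) \le d(r',r)+d(r,c) $... — actually the cleaner choice is to let the cop realizing $d(r,c)$ move \emph{toward $r'$} by the full allowance; then since $d(r,r')\le\tau(1)$ that cop can reach $r'$ exactly, giving value $0 \le d(r,c)$. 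Either way, for every $r'$ there is a $c'$ with $\Val(r',c',N-1,\tau_1^{N-1})$ at most... hmm, this bounds the \emph{inner} quantity but I need it uniformly; the correct route is: since in the worst case the cop can mimic ``chase'' and the recursion is built so that distances can only be made no larger, one shows by a secondary induction that $\Val(r',c',N-1,\tau_1^{N-1}) \le d(r',c') + \sum_{i\ge 2}\tau(i)$ is \emph{not} what we want; rather we want that the robber cannot in one step increase the eventual value beyond $d(r,c)$, i.e. $\min_{c'}\Val(r',c',N-1,\tau_1^{N-1}) \le d(r,c)$ for every $r'$, which follows because the cops can choose $c'$ so that the nearest-cop distance $d(r',c')\le d(r,c)$ (move the nearest cop a distance $d(r,r')\le\tau(1)$ along a geodesic from its position through/toward $r$ then $r'$; by the triangle inequality and isometry of geodesics one can reach a point at distance $\le d(r,c)$ from $r'$), and then by the inductive hypothesis plus the observation that $\Val$ applied to a position with nearest distance $D$ is at most... again needs an auxiliary monotonicity lemma.

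The main obstacle, then, is establishing the right \textbf{auxiliary monotonicity lemma}: that $\Val(r',c',N-1,\tau_1^{N-1})$ is monotone non-decreasing in the nearest-cop distance in an appropriate sense, or more precisely that $\min_{c':\,d(c,c')\le\tau(1)}\Val(r',c',N-1,\tau_1^{N-1})\le d(r,c)$ for every admissible $r'$. I would prove this by a clean sub-induction on $N$ using the single geometric fact that a cop at $c_i$ with $d(r,c_i)=d(r,c)$ can, in a step of length $\tau(1)$, reach any point on a geodesic from $c_i$ to $r$ at arc-distance $\le\tau(1)$, and in particular can shadow the robber so as never to let the nearest distance exceed its value at the moment the cop ``locks on''. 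Once this lemma is in place, the displayed inequality $\max_{r'}\min_{c'}\Val(\cdots)\le d(r,c)$ is immediate, the two recursions coincide at level $N$, and the induction closes. I expect the write-up of the auxiliary lemma to be the only place requiring genuine care; the rest is bookkeeping with the triangle inequality and compactness (the latter already guaranteed by the forthcoming Lemma~\ref{lem:value_is_continuous}, which legitimizes writing $\max$ and $\min$).
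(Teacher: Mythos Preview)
Your proposal is correct and ultimately takes the same route as the paper: induction on $N$ together with the observation that the nearest cop can shadow the robber (move along a geodesic toward $r'$) so as to keep the distance at most $d(r,c)$, which forces $\max_{r'}\min_{c'}\Val(r',c',N-1,\tau_1^{N-1})\le d(r,c)$ and renders the outer $\min$ in \eqref{eq:def_value_finite_game_intermediate} redundant. The paper compresses this into a single sentence (``the cop at distance $d(r,c)$ from the robber could just follow the robber throughout the game, and achieve the value $d(r,c)$''), whereas you reach the same shadowing idea after several detours and package it as a separate ``auxiliary monotonicity lemma'' with its own sub-induction; one small correction is that the shadowing geodesic should run from $c_i$ directly to $r'$ (then the triangle inequality gives $d(c_i',r')\le d(c_i,r')-\tau(1)\le d(c_i,r)+d(r,r')-\tau(1)\le d(r,c)$), not ``through/toward $r$ then $r'$''.
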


\begin{proof}
The proof is by induction on the number of steps. By using the induction hypothesis, the only way that $\Val' \ne \Val$ is that $d(r,c)<\Val(r',c',N-1,\tau_1^{N-1})$. However, if that were the case, the cop at distance $d(r,c)$ from the robber could just follow the robber throughout the game, and achieve the value $d(r,c)$, which is a contradiction to the assumption that $d(r,c) < \Val(r',c',N-1,\tau_1^{N-1})$.
\end{proof}

A corollary of Lemma \ref{lem:L1} is that the value of the game is non-increasing in terms of its number of steps, which we state formally below.

\begin{lemma}\label{lem:value decreases with more steps}
  For any agility $\tau$, initial position $(r,c)$ and positive integers $N\le M$, we have
  $$\Val(r,c,N,\tau^N) \ge \Val(r,c,M,\tau^M).$$
\end{lemma}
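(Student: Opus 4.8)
The plan is to prove this by induction on $N$, exploiting Lemma~\ref{lem:L1} and the intuition that a cop with more time can always ``waste'' the extra step by standing still. It suffices to prove the claim for $M=N+1$; the general case then follows by a trivial chain of inequalities $\Val(r,c,N,\tau^N) \ge \Val(r,c,N+1,\tau^{N+1}) \ge \cdots \ge \Val(r,c,M,\tau^M)$, where one must note that shifting and restricting are compatible, i.e. $(\tau^{n+1})_1^{n} = (\tau_1)^n$, so the recursion applies uniformly along the chain.

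First I would set up the base case $N=0$: here $\Val(r,c,0,\tau^0) = d(r,c)$, and I must show $d(r,c) \ge \Val(r,c,1,\tau^1)$. By Lemma~\ref{lem:L1} (or directly from~(\ref{eq:def_value_finite_game_intermediate})), the value of the one-step game is at most $d(r,c)$, since the cop closest to the robber can simply mirror the robber's move along a shortest path: if the robber goes from $r$ to $r'$ with $d(r,r')\le\tau(1)$, the cop can move to some $c'$ with $d(c,c')\le d(r,r') \le \tau(1)$ and $d(r',c') \le d(r,c)$ (this uses that $X$ is geodesic, so the cop travels along a geodesic parallel-in-distance to the robber's, or more simply that $d(r',c') \le d(r',r) + d(r,c)$ is not quite enough — instead use that moving the closest cop toward $r$ and then matching keeps the distance bounded by $d(r,c)$; the cleanest justification is exactly the ``follow the robber'' argument from the proof of Lemma~\ref{lem:L1}).

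For the inductive step, assume the statement holds for all games with at most $N$ steps, and consider $\Val(r,c,N+1,\tau^{N+1})$. Unfolding~(\ref{eq:def_value_finite_game}) once,
$$
\Val(r,c,N+1,\tau^{N+1}) = \max_{d(r,r')\le\tau(1)} \min_{d(c,c')\le\tau(1)} \Val(r',c',N,\tau_1^{N}).
$$
Fix the robber's optimal choice $r'$ attaining this maximum. In the game $\Val(r',c',N,\tau_1^N)$ the cops may choose to stay put, i.e. take $c'=c$, which is admissible since $d(c,c)=0\le\tau(1)$. Hence
$$
\Val(r,c,N+1,\tau^{N+1}) \le \Val(r',c,N,\tau_1^{N}).
$$
Now the right-hand side is the value of an $N$-step game starting from position $(r',c)$ with agility $\tau_1$; but $d(r',c) \ge \Val(r',c,N,\tau_1^N)$ would be precisely the $N=0$ versus $N$ comparison if $d(r,c)$ were in play — this is not directly what I need. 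Instead the right comparison is: the robber in the $(N+1)$-step game from $(r,c)$ is handed a ``free'' extra step. Let me restructure: I would instead peel off the extra step at the \emph{end}. Using Lemma~\ref{lem:L1} to rewrite the value recursively from the back, an $(N+1)$-step game is an $N$-step game followed by one more step, and in that final step the robber can at best match his current distance (the closest cop follows him), so the value cannot increase — formally, an easy induction shows $\Val(r,c,N+1,\tau^{N+1}) \le \Val(r,c,N, \tau^N)$ by comparing, step-by-step, the cops' strategy in the longer game that replays their optimal $N$-step strategy and then mirrors the robber once more.

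The main obstacle is bookkeeping the shift-and-restrict indexing cleanly and making the ``replay then mirror'' cop strategy rigorous: one must verify that a strategy optimal for the $N$-step game $\Gamma(r,c,N,\tau^N)$ can be lifted to the first $N$ steps of $\Gamma(r,c,N+1,\tau^{N+1})$ and that appending one mirroring move (valid since $X$ is geodesic and the closest cop can follow the robber without increasing distance, exactly as in the proof of Lemma~\ref{lem:L1}) produces a cop strategy whose guaranteed value is $\le \Val(r,c,N,\tau^N)$. Since the value with this explicit cop strategy is an upper bound on $\Val(r,c,N+1,\tau^{N+1})$ by the inequality $v_\tau(s,q_0)\le\Val(\cdot)$ noted after~(\ref{eq:def_value_finite_game}), the induction closes, and the general $N\le M$ case follows by transitivity.
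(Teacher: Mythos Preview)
Your proposal is correct and follows the same idea the paper relies on: the ``cop follows the robber'' observation from the proof of Lemma~\ref{lem:L1}. The paper states the present lemma merely as a corollary of Lemma~\ref{lem:L1} without spelling out details; your final argument (cops play their optimal $N$-step strategy, then the closest cop mirrors the robber in each of the remaining steps, guaranteeing the final distance is at most $\Val(r,c,N,\tau^N)$) is exactly the intended justification. Your first attempt---peeling the extra step from the front and having the cops stay put---fails for the reason you correctly diagnose: it leaves you comparing $\Val(r',c,N,\tau_1^N)$, with a shifted agility and shifted robber position, against $\Val(r,c,N,\tau^N)$, and these are not directly comparable. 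The write-up would be cleaner if you dropped that false start and went straight to the ``replay then mirror'' construction; you could also dispense with the induction scaffolding entirely and simply note that in the $M$-step game the cops can use their optimal $N$-step strategy followed by $M-N$ mirroring moves, which already gives $\Val(r,c,M,\tau^M)\le \Val(r,c,N,\tau^N)$ in one stroke.
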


For our later use we now define the value of a (non-finite) game with given agility:
\begin{equation}\label{eq:Value(r,c,tau) definition}
  \Val(r,c,\tau) = \lim_{N\to\infty} \Val(r,c,N,\tau^N).
\end{equation}
The limit exists since the value of the game decreases with $N$.

Our goal is to approximate the game with finite games, and we use the following as the definition what it means to approximate the game.
The game $\Gamma(r,c,\tau)$ is \emph{$\varepsilon$-approximated} with a finite game $\Gamma(r,c,N,\tau^N)$ if $\Val(r,c,N,\tau^N) - \Val(r,c,\tau) < \varepsilon$. Note that by Lemma \ref{lem:value decreases with more steps}, the difference $\Val(r,c,N,\tau^N) - \Val(r,c,\tau)$ cannot be negative.

We are ready to proceed with a proof that the game value is continuous.

\begin{lemma}\label{lem:value_is_continuous}
The value $\Val(r,c,N,\tau^N)$ of a finite game is continuous in terms of its initial position $(r,c)$ and $\tau^N$. More precisely, if $d((r,c),(r',c')) \le \delta$, then
$$\left|\Val(r,c,N,\tau^N) - \Val(r',c',N,\tau^N)\right| \le 2\delta$$
and if\/ $\Vert\tau^N-{\tau'}^N\Vert_1 := \sum_{n=1}^N |\tau(n)-\tau'(n)| \le \varepsilon$, then
$$\left|\Val(r,c,N,\tau^N) - \Val(r,c,N,{\tau'}^N)\right| \le 2\varepsilon.$$
\end{lemma}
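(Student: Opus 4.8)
The plan is to prove both inequalities by induction on $N$, exploiting the recursive definition \eqref{eq:def_value_finite_game} of the value. The base case $N=0$ is immediate: $\Val(r,c,0,\tau^0) = d(r,c)$, and since $d$ satisfies the triangle inequality, $|d(r,c) - d(r',c')| \le d(r,r') + d(c,c') \le 2\delta$ (using the componentwise definitions $d(r,c)=\min_i d(r,c_i)$ and $d(c,c')=\max_i d(c_i,c_i')$, one checks $|d(r,c)-d(r',c')|\le \max\{d(r,r'),d(c,c')\}+\max\{d(r,r'),d(c,c')\} \le 2\delta$; actually $\delta + \delta$ suffices). For the $\tau$-statement, the base case is trivial since $\Val$ does not depend on $\tau$ when $N=0$.

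For the inductive step on the initial-position bound, suppose $d((r,c),(r',c'))\le \delta$ and the claim holds for $N-1$. Let $r^*$ be the robber's optimal move from $r$ attaining the maximum in \eqref{eq:def_value_finite_game}. The robber at $r'$ can mimic this: since $d(r,r^*)\le \tau(1)$ and $d(r,r')\le\delta$, moving from $r'$ toward $r^*$ along a geodesic, the robber reaches a point $r'^*$ with $d(r'^*, r^*) \le \delta$ while respecting $d(r',r'^*)\le\tau(1)$ — here one uses that $X$ is geodesic, so such an interpolating point exists (if $d(r',r^*)\le\tau(1)$ take $r'^*=r^*$; otherwise take the point at distance $\tau(1)$ from $r'$ along a geodesic to $r^*$, which is within $d(r',r^*)-\tau(1)\le\delta$ of $r^*$). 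A symmetric argument lets the cops at $c'$ mimic any cop move from $c$. Thus for the cops' optimal response $c^*$ to $r^*$ (from position $c$), and for any cop move $\tilde c$ the cops at $c'$ make to respond to $r'^*$, we have a configuration $(r'^*, \tilde c)$ with $d((r^*,c^*),(r'^*,\hat c))\le \delta$ where $\hat c$ mimics $\tilde c$ from $c$; applying the induction hypothesis to these $(N-1)$-step games with agility $\tau_1$ gives the bound $2\delta$, and pushing this through the max–min structure (the robber at $r'$ does at least as well as mimicking, the cops at least as well as mimicking) yields $|\Val(r,c,N,\tau^N) - \Val(r',c',N,\tau^N)| \le 2\delta$. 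Lemma~\ref{lem:L1} lets us ignore the intermediate-distance subtlety.

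For the agility bound, fix $(r,c)$ and suppose $\|\tau^N - \tau'^N\|_1 \le \varepsilon$. Write $\varepsilon_1 = |\tau(1) - \tau'(1)|$ and $\varepsilon' = \sum_{n=2}^N |\tau(n)-\tau'(n)|$, so $\varepsilon_1 + \varepsilon' \le \varepsilon$. In the game with agility $\tau'$, the robber's first move is constrained by $\tau'(1)$ rather than $\tau(1)$; a robber move of length $\le \tau(1)$ can be approximated by truncating it to length $\le\tau'(1)$, losing at most $\varepsilon_1$ in position, i.e. landing within $\varepsilon_1$ of the intended target (again using that $X$ is geodesic). The same holds for the cops. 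So the first-step discrepancy contributes at most $2\varepsilon_1$ via the position-continuity already proved for the $(N-1)$-step subgame, and the remaining discrepancy $\|\tau_1^{N-1} - \tau_1'^{N-1}\|_1 = \varepsilon'$ contributes at most $2\varepsilon'$ by the induction hypothesis; combining through the max–min gives $2\varepsilon_1 + 2\varepsilon'$, which is at most $2\varepsilon$ only if... — here I need to be careful: naively this gives $2\varepsilon$ directly since $2\varepsilon_1 + 2\varepsilon' = 2(\varepsilon_1+\varepsilon') \le 2\varepsilon$. Good.

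The main obstacle I anticipate is the bookkeeping in the inductive step: carefully verifying that "the robber can mimic an approximate move, then the cops respond, then we compare to the original game" respects the alternating max–min order, and that the $2\delta$ constant does not degrade (e.g. to $4\delta$) as we pass through a step — the point being that the $\delta$-perturbation in the robber's position is absorbed into the single constant $2\delta$ at the level of the $(N-1)$-step game rather than compounding. A clean way to organize this is to prove the slightly stronger statement that $\Val(\cdot,\cdot,N,\tau^N)$ is $2$-Lipschitz in $(r,c)$ (with respect to $d((r,c),(r',c'))$) and handle the agility perturbation by reducing it, one step at a time, to position perturbations of size equal to the per-step agility discrepancy. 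I do not expect any genuinely hard analytic content — just disciplined induction using geodesicity and the triangle inequality.
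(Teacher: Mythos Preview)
Your proposal is correct and follows essentially the same approach as the paper: both arguments rest on the observation that a player starting from a $\delta$-perturbed position can \emph{mimic} an optimal move in the unperturbed game by travelling along a geodesic toward the intended target, landing within $\delta$ of it, so that the perturbation never compounds. The paper phrases this as a single global mimicking strategy (maintaining an ``imaginary position'' in the parallel game throughout all $N$ steps), while you organize the same idea as a formal induction on $N$; the content is the same, and your inductive bookkeeping---in particular your check that the $2\delta$ constant does not degrade and your reduction of the agility bound to a per-step position perturbation of size $|\tau(1)-\tau'(1)|$ plus an inductive remainder---is sound.
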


\begin{proof}
Suppose that $d((r,c),(r',c')) \le \delta$. Suppose that the starting position is $(r',c')$. We say that the robber \emph{mimics} the game strategy for the initial position $(r,c)$ if he plays so that he is always within distance $\delta$ from the position he would have in the game when starting with $(r,c)$. The first move is now obvious, the robber would move to a point $x$ at distance at most $\tau(1)$ from $r$. Since $d(r',r)\le\delta$, he can move to a point $r''$ that is within distance $\delta$ from $x$. Now the cops move to their new positions $c_i''$. Since $c_i'$ was within distance $\delta$ from $c_i$, there is a point $y_i$ at distance at most $\delta$ from $c_i''$ and at distance at most $\tau(1)$ from $c_i'$. The robber considers the position $(x,y_1,\dots,y_k)$ as the imaginary position in this step and uses the strategy for the game $\Gamma(r,c,N,\tau^N)$ game as being at this position in the current step. This shows that the distance between $x$ and $y$ is all the time at least $\Val(r,c,N,\tau^N)$, and thus
$$d(r'',c'') \ge \Val(r,c,N,\tau^N) - 2\delta.$$
This implies that
$\Val(r',c',N,\tau^N) \ge \Val(r,c,N,\tau^N) - 2\delta$. To obtain the inequality in the other way, just switch the roles of $(r,c)$ and $(r',c')$.

Similarly we prove continuity with respect to $\tau$. Here we assume that $\sum_{n=1}^N |\tau(n)-\tau'(n)| \le \varepsilon$. In the game $\Gamma(r,c,N,{\tau'}^N)$, either player can mimic his strategy for $\Gamma(r,c,N,\tau^N)$, and all the time being at most $\varepsilon$ away from the imaginary positions in the gameplay of $\Gamma(r,c,N,\tau^N)$. This implies the stated inequality.
\end{proof}

Having defined the value of a finite-step game by (\ref{eq:def_value_finite_game}) (or, equivalently, by (\ref{eq:def_value_finite_game_intermediate})), let us fix $T>0$ and consider all finite-time games $\Gamma(r,c,T)$ with duration $T$. Then we define
\begin{equation}\label{eq:game_value_fixed_T}
  \Val(r,c,T) = \sup_{\substack{N,\tau\\T_N(\tau) = T}} \Val(r,c,N,\tau^N)
\end{equation}
and for the standard game $\Gamma(r,c)$, we define:
\begin{equation}\label{eq:game_value_T_grows}
  \Val(r,c) = \inf_{T \to \infty} \Val(r,c,T).
\end{equation}
Let us observe that by Lemma \ref{lem:value decreases with more steps}, $\Val(r,c,T)$ is non-increasing with $T$, thus the infimum in (\ref{eq:game_value_T_grows}) can also be replaced by a limit when $T\to \infty$.
Instead of (\ref{eq:game_value_T_grows}), we could as well have used (\ref{eq:Value(r,c,tau) definition}) since we have
$$
   \Val(r,c) = \sup_\tau \Val(r,c,\tau).
$$

\section{Choice of agility functions}

It is helpful to know that the ``approximately-best'' agility (which is initially chosen by the robber) may be assumed to be decreasing. This is an easy consequence of the fact that by subdividing a step of duration $t$ into two or more steps of the same total duration will not decrease the value of the game. Formally this is settled by the following result.

Let $\tau$ be an agility, let $0\le\alpha\le1$ and $i\ge 1$. Let $\sigma_i^\alpha \tau$ be the agility defined by the rule:
\begin{equation}
  \sigma_i^\alpha \tau(j) =
     \left\{
       \begin{array}{ll}
         \tau(j), & \hbox{if $1\le j<i$;} \\
         \alpha\tau(i), & \hbox{if $j=i$;} \\
         (1-\alpha)\tau(i), & \hbox{if $j=i+1$;} \\
         \tau(j-1), & \hbox{if $j\ge i+2$.}
       \end{array}
     \right.\label{eq:subdivide agility}
\end{equation}
We say that the agility $\sigma_i^\alpha\tau$ has been obtained from $\tau$ by an \emph{elementary subdivision} of the $i$th step. Agility $\tau'$ is a \emph{subdivision} of $\tau$ if it can be obtained from $\tau$ by a series of elementary subdivisions. Here we allow infinitely many elementary subdivisions, but it is requested that each step of $\tau$ is subdivided into finitely many substeps in order to obtain $\tau'$. We write $\tau' \preceq \tau$ if $\tau'$ is a subdivision of $\tau$.

For further reference we state the following easy observation, whose proof is left to the reader.

\begin{lemma}\label{lem:common subdivision}
  Any two agility functions have a common subdivision.
\end{lemma}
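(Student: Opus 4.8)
The plan is to translate each agility function into its sequence of partial sums, viewed as a set of ``breakpoints'' on the half-line $[0,\infty)$, and then simply take the union of the breakpoint sets of the two given agilities.

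For an agility $\tau$ (so $\tau(n)>0$ and $\sum_{n\ge1}\tau(n)=\infty$), put $a_0=0$ and $a_n=\sum_{i=1}^n\tau(i)$, and let $A(\tau)=\{a_n: n\ge0\}$; this is a strictly increasing sequence with $a_n\to\infty$. Conversely, any set $S\subseteq[0,\infty)$ that contains $0$, is unbounded, and is locally finite (every bounded interval meets $S$ in a finite set) enumerates increasingly as $0=c_0<c_1<c_2<\cdots$ with $c_j\to\infty$, and the rule $\rho(j):=c_j-c_{j-1}$ defines an agility $\rho$ with $A(\rho)=S$. The observation that drives the proof is the following reformulation of the relation $\preceq$: for agilities $\rho,\tau$ one has $\rho\preceq\tau$ if and only if $A(\tau)\subseteq A(\rho)$ and, for every $n\ge1$, the interval $[a_{n-1},a_n]$ contains only finitely many points of $A(\rho)$. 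Indeed, the elementary subdivision $\sigma_i^\alpha\tau$ with $0<\alpha<1$ replaces $A(\tau)$ by $A(\tau)\cup\{a_{i-1}+\alpha\tau(i)\}$, i.e.\ it inserts a single new breakpoint into the $i$th step $(a_{i-1},a_i)$; iterating (with each step subdivided only finitely often, as required in the definition of a subdivision) inserts, for each $i$, a finite set of new breakpoints into $(a_{i-1},a_i)$, and every such insertion pattern is realizable by an admissible series of elementary subdivisions (first subdivide the first step, then the second, and so on).

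Given two agilities $\tau$ and $\tau'$, I would then set $C:=A(\tau)\cup A(\tau')$ and note that $C$ contains $0$, is unbounded (it contains $A(\tau)$), and is locally finite: for any $N$, both $A(\tau)$ and $A(\tau')$, being sequences tending to $\infty$, have only finitely many terms in $[0,N]$. Hence $C$ gives rise to an agility $\rho$ with $A(\rho)=C$, and a coincidence of a breakpoint of $\tau$ with one of $\tau'$ is harmless, since $C$ is taken as a set and its increasing enumeration is still strictly increasing. It then remains to check $\rho\preceq\tau$, the argument for $\tau'$ being identical: one has $A(\tau)\subseteq C=A(\rho)$ by construction, and for each $n\ge1$ the interval $[a_{n-1},a_n]$ meets $A(\rho)$ only in $a_{n-1}$, $a_n$, and the finitely many terms of $A(\tau')$ lying there (again because $A(\tau')$ tends to infinity); so by the reformulation above $\rho$ is a subdivision of $\tau$, and therefore $\rho$ is a common subdivision of $\tau$ and $\tau'$.

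I do not expect a genuine obstacle in this argument. The two points that will need a careful sentence are the dictionary identifying an elementary subdivision with the insertion of one breakpoint (and a general subdivision with a per-step-finite family of insertions, realizable by an admissible, possibly infinite, series of elementary subdivisions), and the verification that the union $C$ is locally finite, so that it really does enumerate with order type $\omega$ and yields a legitimate agility with divergent sum.
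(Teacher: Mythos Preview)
Your proposal is correct, and the paper itself omits the proof entirely (``proof is left to the reader''), so there is nothing to compare against; your breakpoint-union construction is precisely the natural argument the reader is expected to supply.
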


The following lemma shows that using $\tau'$ instead of $\tau$ goes to the favour of the robber. As a consequence, we may assume that the robber always chooses a decreasing agility.

\begin{lemma}\label{lem:subdivide tau}
Suppose that $\tau'$ is a subdivision of an agility $\tau$. Then
$$\Val(r,c,\tau) \le \Val(r,c,\tau').$$
\end{lemma}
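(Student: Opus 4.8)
The plan is to reduce the claim to a single elementary subdivision and then iterate, handling the limit over infinitely many subdivisions by a continuity/approximation argument. So first I would prove: if $\tau' = \sigma_i^\alpha\tau$ is obtained from $\tau$ by one elementary subdivision of the $i$th step, then $\Val(r,c,N',{\tau'}^{N'}) \ge \Val(r,c,N,\tau^N)$ for the appropriate matching of step counts (with $N' = N+1$ when $i \le N$, and $N'=N$ otherwise). The key point is a strategy-stealing argument for the robber: in the game with agility $\tau'$, the robber plays exactly as in the game with agility $\tau$ for steps $1,\dots,i-1$; then, where the $\tau$-game makes one move of duration $\tau(i)$, the $\tau'$-game makes two moves of durations $\alpha\tau(i)$ and $(1-\alpha)\tau(i)$. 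The robber can realize his intended $\tau$-move $r^{i-1}\to r^i$ along a geodesic, stopping at the intermediate point reached after arclength $\alpha\tau(i)$ and then continuing; so any robber strategy for $\tau$ lifts to one for $\tau'$ achieving at least the same value. Symmetrically — and this is what actually makes the inequality go the robber's way rather than being an equality — the cops in the $\tau'$-game get strictly more flexibility (they respond after the robber's half-move, then again), but any cop strategy in the $\tau'$-game can be pushed forward to a legal cop strategy in the $\tau$-game: a cop move over the two substeps has total length at most $\alpha\tau(i) + (1-\alpha)\tau(i) = \tau(i)$, so it is a legal single $\tau$-move, and the intermediate distance the $\tau'$-game exposes at the half-step only constrains the robber, not helps him. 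Formally I would run this through the recursive definition~(\ref{eq:def_value_finite_game}) / (\ref{eq:def_value_finite_game_intermediate}) by induction on $i$: for $i=1$ one checks directly that $\max_{r'}\min_{c'}$ over a single $\tau(1)$-step equals (or is dominated by) the corresponding two-step expression, using that a point reachable in one step of length $\tau(1)$ is reachable via a point at arclength $\alpha\tau(1)$ and then $(1-\alpha)\tau(1)$, and conversely; for $i>1$ the outer $\max_{r'}\min_{c'}$ is the same in both games and one applies the induction hypothesis to the residual games with shifted agilities.

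Having the single-subdivision inequality, for a \emph{finite} chain of elementary subdivisions the result $\Val(r,c,\tau) \le \Val(r,c,\tau')$ follows by composing, using that $\Val(r,c,\cdot)$ is defined via the limit~(\ref{eq:Value(r,c,tau) definition}) and that the subdivision merely relabels which finite truncation corresponds to which duration: a truncation of $\tau'$ to its first $N'$ steps covers at least as much total duration of $\tau$ as is covered by the first $N$ steps of $\tau$ for suitable $N \le N'$, and by Lemma~\ref{lem:value decreases with more steps} values only decrease as we take longer truncations, so $\lim_{N'}\Val(r,c,N',{\tau'}^{N'}) \ge \lim_N \Val(r,c,N,\tau^N)$.

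The remaining case — a \emph{subdivision} $\tau' \preceq \tau$ obtained by infinitely many elementary subdivisions, with each original step cut into finitely many pieces — is where I expect the one genuine subtlety. Here I would argue as follows: fix $\varepsilon > 0$. Since $\Val(r,c,\tau) = \lim_{N\to\infty}\Val(r,c,N,\tau^N)$, choose $N$ with $\Val(r,c,N,\tau^N) < \Val(r,c,\tau) + \varepsilon$; actually we want the other direction, so instead pick any $N$ at all and note $\Val(r,c,N,\tau^N) \ge \Val(r,c,\tau)$. The first $N$ steps of $\tau$ are subdivided, within $\tau'$, into finitely many substeps — say the first $M$ steps of $\tau'$ — because each of the $N$ steps is cut into finitely many pieces. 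Thus $\tau'$ restricted to its first $M$ steps is a finite subdivision of $\tau$ restricted to its first $N$ steps, and the finite-chain argument above gives $\Val(r,c,M,{\tau'}^M) \ge \Val(r,c,N,\tau^N) \ge \Val(r,c,\tau)$. Since $\Val(r,c,\tau') = \lim_{M\to\infty}\Val(r,c,M',{\tau'}^{M'}) $ and the sequence is non-increasing, we get $\Val(r,c,\tau') \ge \Val(r,c,M,{\tau'}^M) \ge \Val(r,c,\tau)$, which is exactly the claim. The main obstacle, then, is not the limit bookkeeping but getting the single-subdivision strategy-stealing argument airtight against the recursive min-max definition — in particular making precise that exposing the robber's position at the intermediate half-step, and giving the cops an extra response, can only help the cops (equivalently, only help the robber's guaranteed value go up when we instead use the robber-favoring direction), which is most cleanly done by showing the two-substep $\max\min$ expression is sandwiched appropriately against the one-step expression using that geodesics can be reparametrized to pass through any prescribed intermediate arclength point.
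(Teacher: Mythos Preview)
Your approach is correct and is essentially the paper's: reduce to a single elementary subdivision of a finite-step game, then have the robber steal his $\tau$-strategy by realizing the intended $i$th move along a geodesic in two substeps of lengths $\alpha\tau(i)$ and $(1-\alpha)\tau(i)$. The paper's proof is exactly this argument, stated in two sentences without the limit bookkeeping you (rightly) spell out for the passage from finite-step games to $\Val(r,c,\tau)$.

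One slip to fix in your limit paragraph: a non-increasing sequence has limit $\le$ each term, so the line ``$\Val(r,c,\tau') \ge \Val(r,c,M,{\tau'}^M)$'' is backwards. The correct finish is simply to let $N\to\infty$ (hence $M=M(N)\to\infty$) in the established inequality $\Val(r,c,M,{\tau'}^{M}) \ge \Val(r,c,N,\tau^N)$ and pass to the limit on both sides. Also, your aside about pushing cop strategies from the $\tau'$-game to the $\tau$-game is not quite right --- the $\tau'$-cops react to the robber's intermediate position, which is unavailable in the $\tau$-game --- and is in any case unnecessary: the robber-side lifting alone yields the desired inequality, which is all the paper uses.
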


\begin{proof}
It suffices to prove the result for finite-step games, and for each such game $\Gamma(r,c,N,\tau^N)$, it suffices to prove that an elementary subdivision does not increase the value of the game, i.e.
$$\Val(r,c,N,\tau^N) \le \Val(r,c,N+1,\sigma_i^\alpha\tau^{N+1}).$$
To see this, consider steps $i$ and $i+1$ of the game with the subdivided agility. The robber can just follow the optimal strategy of the $i$th step from the original game $\Gamma(r,c,N,\tau^N)$ and move to the desired position in two steps. It is easy to see that this cannot give him smaller value of the game.
\end{proof}

\begin{lemma}\label{lem:approx_RC}
For every initial position $(r,c)$ and every $\varepsilon>0$, there is an integer $N$ and an agility $\tau^N$ such that for any initial position $(r,c)$ we have:
$$
   \Val(r,c)-\varepsilon < \Val(r,c,N,\tau^N) < \Val(r,c)+\varepsilon.
$$
\end{lemma}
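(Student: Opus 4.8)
\emph{Proof plan.} The statement should be read uniformly: given $\varepsilon>0$ we want a \emph{single} $N$ and a \emph{single} $\tau^N$ with $|\Val(r,c,N,\tau^N)-\Val(r,c)|<\varepsilon$ for all $(r,c)\in X^{k+1}$. The plan is to build $\tau^N$ in two stages — first fix a duration $T_0$ that is uniformly ``long enough'', then merge finitely many near-optimal agilities of that duration into one by taking a common subdivision — the glue being the continuity Lemma~\ref{lem:value_is_continuous} and compactness of $X^{k+1}$. For Stage~1: since $\Val(r,c,T)$ is non-increasing in $T$ and $\Val(r,c)=\inf_T\Val(r,c,T)$ by (\ref{eq:game_value_T_grows}), for each point $p=(r,c)$ there is an integer $T(p)$ with $\Val(p,T(p))<\Val(p)+\varepsilon/4$. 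By Lemma~\ref{lem:value_is_continuous} every $\Val(\cdot,N,\tau^N)$ is $2$-Lipschitz on $X^{k+1}$ in the metric $d((r,c),(r',c'))$, hence so is the supremum $\Val(\cdot,T)$ from (\ref{eq:game_value_fixed_T}) and so is $\Val(\cdot)$; thus $p'\mapsto\Val(p',T(p))-\Val(p')$ is continuous and the bound $<\varepsilon/2$ persists on a neighbourhood of $p$. A finite subcover of $X^{k+1}$ uses finitely many values $T(p)$; letting $T_0$ be their maximum, monotonicity in $T$ gives $\Val(r,c,T_0)<\Val(r,c)+\varepsilon/2$ for every $(r,c)\in X^{k+1}$.

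For Stage~2, I would produce a universal agility of duration exactly $T_0$. For each $p=(r,c)$, definition (\ref{eq:game_value_fixed_T}) provides a finite game $\Gamma(p,N_p,\tau_p^{N_p})$ with $T_{N_p}(\tau_p)=T_0$ and $\Val(p,N_p,\tau_p^{N_p})>\Val(p,T_0)-\varepsilon/4\ge\Val(p)-\varepsilon/4$. By Lemma~\ref{lem:value_is_continuous} the function $p'\mapsto\Val(p',N_p,\tau_p^{N_p})-\Val(p')$ is $4$-Lipschitz and exceeds $-\varepsilon/4$ at $p$, so it exceeds $-\varepsilon/2$ on an open ball around $p$. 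Cover $X^{k+1}$ by finitely many such balls, around $p_1,\dots,p_m$, and let $\tau^*$ be a common subdivision of $\tau_{p_1},\dots,\tau_{p_m}$, which exists by iterating Lemma~\ref{lem:common subdivision}. Then $\tau^*$ has finitely many steps, say $N$ of them, all within duration $T_0$, since elementary subdivisions preserve total duration; thus $T_N(\tau^*)=T_0$. Put $\tau^N:=(\tau^*)^N$.

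It remains to verify both bounds for this choice. Fix any $p'=(r,c)$ and pick $i$ with $p'$ in the ball around $p_i$. Since $(\tau^*)^N$ is obtained from $\tau_{p_i}^{N_{p_i}}$ by finitely many elementary subdivisions, the computation in the proof of Lemma~\ref{lem:subdivide tau} (one elementary subdivision never decreases the value of a finite game) yields $\Val(p',N,\tau^N)\ge\Val(p',N_{p_i},\tau_{p_i}^{N_{p_i}})>\Val(p')-\varepsilon/2$. On the other hand, by (\ref{eq:game_value_fixed_T}) and Stage~1, $\Val(p',N,\tau^N)\le\Val(p',T_N(\tau^*))=\Val(p',T_0)<\Val(p')+\varepsilon/2$. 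Hence $|\Val(p',N,\tau^N)-\Val(p')|<\varepsilon/2<\varepsilon$ for every $p'$, as required.

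The step I expect to be the crux is the lower bound $\Val(p',N,\tau^N)\ge\Val(p',N_{p_i},\tau_{p_i}^{N_{p_i}})$: it works precisely because refining an agility is advantageous to the robber (Lemma~\ref{lem:subdivide tau}), so the common subdivision $\tau^*$ automatically inherits the near-optimality of \emph{each} $\tau_{p_i}$ on the corresponding ball. This is exactly what forces the normalisation of Stage~1 — the $\tau_{p_i}$ must all have the \emph{same} duration $T_0$, so that a common subdivision makes sense, stays finite, and still has duration $T_0$, which is what lets Stage~1 bound it from above. The remaining points (sup's and inf's of $2$-Lipschitz functions being $2$-Lipschitz, legitimate since all game values are bounded by $\diam X$; and the two applications of compactness) are routine.
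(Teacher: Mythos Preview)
Your argument is correct, but you have proved the \emph{uniform} statement (one $N,\tau^N$ working for all $(r,c)$), whereas the paper's Lemma~\ref{lem:approx_RC}---despite its admittedly confusing double quantification of $(r,c)$---is intended and proved \emph{pointwise}: fix $(r,c)$, pick $T$ large so that $\Val(r,c,T)\le\Val(r,c)+\varepsilon$ (possible by (\ref{eq:game_value_T_grows})), then pick $N,\tau^N$ with $T_N(\tau)=T$ and $\Val(r,c,N,\tau^N)\ge\Val(r,c,T)-\varepsilon$ (possible by (\ref{eq:game_value_fixed_T})); the two inequalities follow since $\Val(r,c,T)\ge\Val(r,c)$. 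That is the whole proof.

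What you wrote is in effect a proof of the paper's \emph{next} result, Corollary~\ref{cor:approx_allRC}, with Lemma~\ref{lem:approx_RC} folded in. The paper's route to the corollary is the same in spirit---compactness of $X^{k+1}$, a finite net $Y$, Lemma~\ref{lem:common subdivision} to merge the finitely many near-optimal agilities, Lemma~\ref{lem:subdivide tau} to retain near-optimality after refinement, and Lemma~\ref{lem:value_is_continuous} to pass from $Y$ to all of $X^{k+1}$---but your two-stage version is tidier: your Stage~1 first normalises all durations to a common $T_0$, so the common subdivision in Stage~2 is unambiguously a finite agility of duration exactly $T_0$, which cleanly yields both inequalities. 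The paper's proof of Corollary~\ref{cor:approx_allRC} glosses over this point (it takes a ``common subdivision'' of finite agilities that a priori have different durations), so your extra care there is a genuine improvement rather than mere verbosity.
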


\begin{proof}
Take $T$ large enough so that $\Val(r,c,T) \le \Val(r,c)+\varepsilon$ and then choose $N,\tau^N$ so that $T_N(\tau)=T$ and
$\Val(r,c,N,\tau^N) \ge \Val(r,c,T)-\varepsilon$. This implies that $\Val(r,c,N,\tau^N)\ge \Val(r,c,T)-\varepsilon \ge \Val(r,c)-\varepsilon$ and $\Val(r,c,N,\tau^N)\le \Val(r,c,T) \le \Val(r,c)+\varepsilon$.
\end{proof}

\begin{corollary}\label{cor:approx_allRC}
For every $\varepsilon>0$ and $T>0$, there exists a finite game with $N$ steps and finite agility $\tau^N$ with $T_N(\tau)\ge T$ such that for every initial position $(r,c)$, we have
$$
   \Val(r,c)-\varepsilon < \Val(r,c,N,\tau^N) < \Val(r,c)+\varepsilon.
$$
\end{corollary}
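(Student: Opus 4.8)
The plan is to deduce Corollary~\ref{cor:approx_allRC} from Lemma~\ref{lem:approx_RC} by a compactness argument, upgrading the per-position approximation to a uniform one, and then adjusting the duration so that $T_N(\tau)\ge T$. First I would apply Lemma~\ref{lem:approx_RC} together with Lemma~\ref{lem:value_is_continuous}: the function $(r,c)\mapsto\Val(r,c)$ is continuous on $X^{k+1}$ (as a decreasing limit of the continuous functions $\Val(r,c,N,\tau^N)$, with uniform control via the $2\delta$ estimate), hence uniformly continuous since $X^{k+1}$ is compact. Likewise each $\Val(\cdot,\cdot,N,\tau^N)$ is uniformly continuous with the \emph{same} modulus $2\delta$ independent of $N$ and $\tau^N$. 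So it suffices to establish the inequality at each point of a finite $\frac{\varepsilon}{6}$-net of $X^{k+1}$ and then spread it to all of $X^{k+1}$ at a cost of, say, $\frac{\varepsilon}{3}$ in each direction.

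Concretely, fix a finite set $P=\{(r_1,c_1),\dots,(r_m,c_m)\}$ that is an $\eta$-net for $\eta$ small enough that $2\eta<\varepsilon/3$ and the variation of $\Val(\cdot)$ over any $\eta$-ball is below $\varepsilon/3$. For each $j\in[m]$ apply Lemma~\ref{lem:approx_RC} with error $\varepsilon/3$ to obtain $N_j,\tau_j^{N_j}$; then take a common refinement. Here is where Lemma~\ref{lem:common subdivision} and Lemma~\ref{lem:subdivide tau} do the work: let $\tau$ be a common subdivision of $\tau_1,\dots,\tau_m$ (finitely many agilities, so this exists), truncated to $N$ steps where $N$ is large enough and $T_N(\tau)=T_{N_j}(\tau_j^{N_j})$ is preserved under subdivision for each $j$ — actually subdivision does not change the total duration, so $T_N(\tau)$ equals the common value only if the original durations agreed, which they need not. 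To fix this, I would instead, before refining, pad each $\tau_j$ with extra small steps so that all the durations equal a common value $T'\ge\max_j T_{N_j}(\tau_j)$; by Lemma~\ref{lem:value decreases with more steps} extending the game can only decrease the value, and by choosing $T'$ and then the padding fine we stay within $\varepsilon/3$ of $\Val(r_j,c_j)$ on the relevant side, using also that $\Val(r_j,c_j,T')\ge\Val(r_j,c_j)-$(small) when $T'$ is chosen via \eqref{eq:game_value_fixed_T}. Then take the common subdivision $\tau$ of the padded agilities; by Lemma~\ref{lem:subdivide tau}, $\Val(r_j,c_j,N,\tau^N)\ge\Val(r_j,c_j,T')-\varepsilon/3\ge \Val(r_j,c_j)-2\varepsilon/3$, while $\Val(r_j,c_j,N,\tau^N)\le\Val(r_j,c_j,T')\le\Val(r_j,c_j)+\varepsilon/3$ by monotonicity in $T$ again. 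Finally enlarge $T'$ at the outset to be $\ge T$, which is free.

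With the single agility $\tau^N$ in hand satisfying the two-sided bound at every net point $(r_j,c_j)$ with slack $2\varepsilon/3$, I would conclude: for arbitrary $(r,c)$, pick $(r_j,c_j)\in P$ with $d((r,c),(r_j,c_j))\le\eta$; then $|\Val(r,c,N,\tau^N)-\Val(r_j,c_j,N,\tau^N)|\le 2\eta<\varepsilon/3$ by Lemma~\ref{lem:value_is_continuous}, and $|\Val(r,c)-\Val(r_j,c_j)|<\varepsilon/3$ by the choice of the net; combining the four inequalities gives $\Val(r,c)-\varepsilon<\Val(r,c,N,\tau^N)<\Val(r,c)+\varepsilon$, as required.

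The main obstacle is the bookkeeping around the total duration: Lemma~\ref{lem:approx_RC} hands back agilities with \emph{different} durations $T_{N_j}(\tau_j)$, while a common subdivision preserves each duration separately, so there is no single $\tau$ refining all of them unless their durations already coincide. The clean way around this is the padding-then-refine order described above — equalize the durations first by appending short steps (harmless by Lemma~\ref{lem:value decreases with more steps} and by the definitions \eqref{eq:game_value_fixed_T}--\eqref{eq:game_value_T_grows}), making sure the common value exceeds $T$, and only then invoke Lemma~\ref{lem:common subdivision}. Everything else is routine use of uniform continuity (Lemma~\ref{lem:value_is_continuous}) and the monotonicity lemmas, so the write-up should be short once the duration issue is handled carefully.
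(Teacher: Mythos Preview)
Your approach is essentially the paper's: cover $X^{k+1}$ by a finite $\eta$-net, apply Lemma~\ref{lem:approx_RC} at each net point, merge the resulting agilities via Lemmas~\ref{lem:common subdivision} and~\ref{lem:subdivide tau}, and then spread to all positions using Lemma~\ref{lem:value_is_continuous}. The one point on which you are more careful than the paper is the duration bookkeeping: a common subdivision only exists when the total durations agree, and the paper glosses over this. Your padding fix is correct, but there is a cleaner way that avoids the somewhat muddled middle paragraph: note from the proof of Lemma~\ref{lem:approx_RC} that the $T$ chosen there can be replaced by any larger value (since $\Val(r,c,T)$ is non-increasing in $T$), so you may simply take a single $T'\ge \max\{T_1,\dots,T_m,T\}$ \emph{first} and then, for each net point $(r_j,c_j)$, pick $N_j,\tau_j^{N_j}$ with $T_{N_j}(\tau_j)=T'$ realizing the approximation; now all durations coincide and the common subdivision exists directly. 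Also, your use of $|\Val(r,c)-\Val(r_j,c_j)|<\varepsilon/3$ in the final step tacitly uses continuity of $\Val(\cdot)$; this follows immediately from Lemma~\ref{lem:value_is_continuous} by passing to the limit in $N$ and then taking the supremum over $\tau$, so it is fine, but you should state it rather than derive it as a ``decreasing limit'' (which it is not, since $\Val(r,c)=\sup_\tau\lim_N\Val(r,c,N,\tau^N)$).
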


\begin{proof}
The goal is to show that there are $N$ and $\tau^N$ that approximate the game for any initial position. We will say that $N,\tau^N$ give \emph{$\varepsilon$-approximation} for $(r,c)$ if the inequalities of the corollary hold for the initial position $(r,c)$. Since $X$ and hence also $X^{k+1}$ is compact, there is a finite set $Y$ of initial positions such that any other initial position is within distance $\varepsilon/4$ from one of the positions in $Y$. For each $(r,c)\in Y$, there are $N$ and $\tau^N$ that give $\varepsilon/2$-approximation. By Lemma \ref{lem:common subdivision} there is a common subdivision of all such agility functions $\tau^N$. Since by subdividing the agility, the value only increases (Lemma \ref{lem:subdivide tau}), we may assume that the same pair $N,\tau^N$ $\varepsilon/2$-approximates every initial position $(r,c)\in Y$. Since any other position is at distance at most $\varepsilon/4$ from $Y$, Lemma \ref{lem:value_is_continuous} implies that the finite game with $N,\tau^N$ $\varepsilon$-approximates every initial position.
\end{proof}

Corollary \ref{cor:approx_allRC} combined with Lemma \ref{lem:value_is_continuous} implies that the value $\Val(r,c)$ of any Cops and Robber game is a continuous function, depending on the initial position. This in particular shows that the set of cop-winning initial positions is closed.

The use of agility functions is the main technical reason that makes the game ``non-compact". Usually, there will be no optimal agility since the set of all agility functions is not closed. However, we can make some general observations that will enable us to use certain assumptions about ``near-optimal" agility.

We say that $\tau$ is \emph{decreasing} if $\tau(n+1)<\tau(n)$ for every $n\ge1$. It is easy to see by using induction that for every $\tau$ there is a decreasing agility $\tau'$ such that $\tau'\preceq\tau$. The following lemma shows that using $\tau'$ instead of $\tau$ goes to the favour of the robber, and therefore, we may assume that the robber always chooses a decreasing agility.

For each agility $\tau\in\Sigma^0$, there is a decreasing agility $\tau'\in \Sigma^0$ that is a subdivision of $\tau$. The following statement enables us to restrict our attention to decreasing agility functions

\begin{lemma}\label{lem:subdivide strategies}
Suppose that $\tau\in\Sigma^0$ is an agility function and that $\tau'$ is a subdivision of $\tau$ that is decreasing.

(a) If $s: X^{k+1} \to X$ is a strategy of the robber with agility function $\tau$,
then there is a strategy $s': X^{k+1} \to X$ for agility $\tau'$ such that
\begin{equation}\label{eq:subdividing agility better for R}
   \inf_{q'} v_{\tau'}(s',q') \ge \inf_{q} v_{\tau}(s,q),
\end{equation}
where $q$ and $q'$ in both infima run over all strategies of the cops with agility $\tau$ and $\tau'$, respectively.

(b) For any position $(r,c)$, $\Val(r,c,\tau') \ge \Val(r,c,\tau)$.
\end{lemma}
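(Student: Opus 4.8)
The plan is to reduce everything to the case of a single elementary subdivision and then iterate, exactly in the spirit of Lemma~\ref{lem:subdivide tau}, but now tracking strategies and not just values. First I would observe that a decreasing subdivision $\tau'$ of $\tau$ is obtained by a (possibly infinite) sequence of elementary subdivisions in which each step of $\tau$ is cut into finitely many substeps; so it suffices to show: given a strategy $s$ with agility $\tau$, there is a strategy $\hat s$ with agility $\sigma_i^\alpha\tau$ satisfying $\inf_{\hat q}v_{\sigma_i^\alpha\tau}(s,\hat q)\ge \inf_q v_\tau(s,q)$, and then to pass to the limit over the infinitely many subdivisions (the limit is harmless because the value is non-increasing in the number of steps and each original step is subdivided only finitely often, so after finitely many subdivisions the behaviour on any fixed time-prefix has stabilized).

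For the single elementary subdivision at step $i$: in the game with agility $\sigma_i^\alpha\tau$, the robber simulates an ``imaginary'' play of the $\tau$-game. For steps $j<i$ he copies $s$ verbatim. For the two substeps $i$ and $i+1$ of duration $\alpha\tau(i)$ and $(1-\alpha)\tau(i)$, the robber does the following: he knows where $s$ would send him in the single $\tau$-step $i$ (a point $r^\ast$ with $d(r^{i-1},r^\ast)\le\tau(i)$), so he moves toward $r^\ast$ along a geodesic, covering distance $\alpha\tau(i)$ in the first substep and the remaining distance ($\le(1-\alpha)\tau(i)$) in the second, arriving at $r^\ast$. Meanwhile the cops in the subdivided game make two moves totalling distance at most $\tau(i)$ each; the robber records their combined displacement as a single legal $\tau$-step for the imaginary cops and feeds the resulting imaginary position into $s$ to continue. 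For $j\ge i+2$ he again copies $s$ on the shifted indices. This defines $\hat s$. The key point is that at the end of each completed $\tau$-step the real robber position equals the imaginary one, and the real cop positions are a valid $\tau$-cop-response; hence the imaginary play is a legal instance of the $\tau$-game against $s$, so its running minimum distance is at least $\inf_q v_\tau(s,q)$. One must also check the intermediate distances introduced by the extra substep: at substep $i$ the robber is somewhere on the geodesic from $r^{i-1}$ to $r^\ast$, and here is where we use that $\tau$ (hence the relevant structure) and more to the point the fact recorded in Lemma~\ref{lem:L1}/\eqref{eq:def_value_finite_game_intermediate} — namely that once a cop is within the current distance it can simply follow, so an intermediate configuration never costs the robber below the value already guaranteed; combined with the observation that the cops' partial move in substep $i$ covers at most distance $\alpha\tau(i)\le\tau(i)$, the intermediate distance is bounded below by the value of the continuation, which is $\ge\inf_q v_\tau(s,q)$. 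This gives \eqref{eq:subdividing agility better for R} for one elementary subdivision, and part~(a) follows by iteration and the limiting argument above.

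Part~(b) is then immediate: $\Val(r,c,\tau)=\sup_s\inf_q v_\tau(s,q)$ by definition, and by part~(a) every robber strategy $s$ with agility $\tau$ has a counterpart $s'$ with agility $\tau'$ whose guaranteed value is at least as large, so $\sup_{s'}\inf_{q'}v_{\tau'}(s',q')\ge\sup_s\inf_q v_\tau(s,q)$, i.e. $\Val(r,c,\tau')\ge\Val(r,c,\tau)$. (Strictly, $\Val(r,c,\tau)$ was introduced in \eqref{eq:Value(r,c,tau) definition} as a limit of finite-game values; one checks as in the proof of Lemma~\ref{lem:subdivide tau} that this coincides with $\sup_s\inf_q v_\tau(s,q)$, or one simply runs the argument at the level of finite $N$-step games and takes $N\to\infty$, which is cleaner and avoids any measurability worry about the strategy map.)

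I expect the main obstacle to be the bookkeeping in the limiting step — arguing that simulating $s$ through infinitely many elementary subdivisions yields a well-defined strategy $s'$ for $\tau'$. The clean way around this is to note that for any target precision $\varepsilon$ and any finite horizon, only finitely many elementary subdivisions affect the value within that horizon (because the value decreases with more steps and stabilizes on each time-prefix after the finitely many cuts of the steps in that prefix), so one never actually needs a genuinely infinite simulation — the finite-horizon estimates already force $\Val(r,c,\tau')\ge\Val(r,c,\tau)$ after taking $N\to\infty$. The second, more routine, point requiring care is verifying that the robber's geodesic-interpolation over the two substeps is always a legal pair of moves, i.e. that $d$ along the geodesic behaves additively; this is exactly the defining property of a geodesic space and was set up in Section~\ref{sect:geodesic spaces}.
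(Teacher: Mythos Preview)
Your approach is essentially the paper's: the robber walks a geodesic through the substeps to the target dictated by $s$, and the intermediate-distance worry is dispatched by the ``a close cop can just follow'' observation (your appeal to Lemma~\ref{lem:L1} is exactly this). The one point you miss and the paper makes is the actual role of the hypothesis that $\tau'$ is \emph{decreasing}: a strategy is formally a function of the current position and the remaining agility, so your step-by-step description of $\hat s$ only defines a legitimate strategy if different substeps can be distinguished by their inputs; since $\tau'$ is strictly decreasing, no two substeps share the same remaining agility, and the function is well-defined even if the same position recurs. Conversely, you are more careful than the paper about iterating over infinitely many elementary subdivisions and about the relation between $\Val(r,c,\tau)$ and $\sup_s\inf_q v_\tau(s,q)$ in part~(b); the paper simply treats all substeps of a given $\tau$-step at once and declares (b) immediate from (a) and~\eqref{eq:Value(r,c,tau) definition}.
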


\begin{proof}
(a) For the subdivided $n$th step of $\tau$, the agility $\tau'$ provides a finite number of steps, whose total duration is equal to $\tau(n)$. If the strategy $s$ tells the robber to move from position $r$ to $r'$, the robber can do the same with subdivided steps, and his strategy $s'$ will move from $r$ through all substeps to $r'$. Formally, there could have been a problem with this if at some later substep would need to make the same decision. But since $\tau'$ is decreasing, the strategy for the same position will have smaller values in the agility function, so any later substeps can use different strategy values.

To prove (\ref{eq:subdividing agility better for R}), we just argue that if the robber, when using strategy $s$, is able to keep distance $v$ from the cops for any cops' strategy $q$, he is also able to keep the same distance by using his strategy $s'$. Suppose not. Then the cops have a strategy $q'$ achieving better outcome. They can mimic that strategy under $\tau$. If they gain better distance in a substep, they can keep that distance by following the robber.

(b) This is an immediate corollary of the definition (\ref{eq:Value(r,c,tau) definition}) of $\Val(r,c,\tau)$ and of part (a) of this lemma.
\end{proof}

\section{Volatile games}

Suppose that we have nonnegative real numbers $(\varepsilon_n)_{n\ge0}$ and that two players play the game of cops and robber in $X$. Suppose that after completing each step $n$ of the game ($n\ge0$), an \emph{adversary} changes the positions of the robber and of the cops by moving them to points that are at most $\varepsilon_n$ away from their current position. Then we say that the players play a \emph{volatile game} with \emph{perturbation $(\varepsilon_n)_{n\ge0}$}.

Lemma \ref{lem:value_is_continuous} can be interpreted as making a small change in the position before making step 1 (which is ``after making step 0''), thus it corresponds to a volatile game where $\varepsilon_0=\varepsilon$ and $\varepsilon_n=0$ for $n\ge 1$. For a general finite-step volatile game $\Gamma(r,c,N,\tau^N,(\varepsilon_n)_{n\ge0})$ we define the guaranteed value for the robber as the minimum distance from the cops over all possible strategies, when the adversary ``helps the cops'' as much as he can. This can be defined recursively in the same way as in (\ref{eq:def_value_finite_game}):

\begin{equation}\label{eq:def_value_volatile_game}
  \ValC(r,c,N,\tau^N,(\varepsilon_n)_{n\ge0}) =
    \left\{
      \begin{array}{ll}
        \max\{d(r,c)-2\varepsilon_0,0\}, & \hbox{if $N=0$;} \\[1.5mm]
        \min\limits_{r',c'}\,
        \max\limits_{r''}\,
        \min\limits_{c''}\, \ValC(r'',c'',N-1,\tau_1^{N-1},(\varepsilon'_n)_{n\ge0})), & \hbox{otherwise;}
      \end{array}
    \right.
\end{equation}
where $\varepsilon'_n = \varepsilon_{n+1}$ for $n\ge0$ and the minima and the maximum are taken over all $r',c',r'',c''$ for which
$d(r,r')\le \varepsilon_0$, $d(c_i,c'_i)\le \varepsilon_0$, $d(r',r'')\le\tau(1)$, and $d(c'_i,c''_i)\le\tau(1)$ ($i\in [k]$).
In the definition (the case $N=0$) we have used the fact that at the last step of the game, the adversary can move the robber and its closest cop towards each other by the distance of maximum allowed perturbation in this step. For the recursive step we also used the possibility of the worst perturbation of the positions made by the adversary.

Similarly, we define the guaranteed value for the cops as the minimum distance from the robber over all possible strategies. This can be defined recursively in the same way as in (\ref{eq:def_value_volatile_game}):

\begin{equation}\label{eq:def_value_volatile_game_cops}
  \ValR(r,c,N,\tau^N,(\varepsilon_n)_{n\ge0}) =
    \left\{
      \begin{array}{ll}
        d(r,c), & \hbox{if $N=0$;} \\[1.5mm]
        \max\limits_{r',c'}\, \max\limits_{r''}\, \min\limits_{c''}\, \ValR(r'',c'',N-1,\tau_1^{N-1},(\varepsilon'_n)_{n\ge0})), & \hbox{otherwise.}
      \end{array}
    \right.
\end{equation}
The difference here is that we have estimated the worst way for the cops when the adversary will try to ``help the robber''.

Clearly, $\ValC$ and $\ValR$ for volatile games need not be the same, and it may also happen that $\ValC>\ValR$. However, the two values cannot be too far from each other as we prove next.

\begin{lemma}\label{lem:value volatile game}
  Let $\Gamma(r,c,N,\tau^N)$ be a finite-step game and let $\Gamma(r,c,N,\tau^N,(\varepsilon_n)_{n\ge0})$ be a volatile version. Set $\delta_n = \sum_{i=0}^n \varepsilon_i$. Then
  $$
     \ValC(r,c,N,\tau^N,(\varepsilon_n)_{n\ge0}) \ge \Val(r,c,N,\tau^N) - 2\delta_N
  $$
  and
  $$
     \ValR\left(r,c,N,\tau^N,(\varepsilon_n)_{n\ge0}\right) \le \Val(r,c,N,\tau^N) + 2\delta_{N-1}.
  $$
\end{lemma}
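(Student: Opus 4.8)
The plan is to prove both inequalities simultaneously by induction on the number of steps $N$, working directly from the recursive definitions (\ref{eq:def_value_volatile_game}) and (\ref{eq:def_value_volatile_game_cops}) of $\ValC$ and $\ValR$ for volatile games, and using the continuity estimate of Lemma \ref{lem:value_is_continuous} to absorb the perturbations. The base case $N=0$ is immediate: $\ValC(r,c,0,\tau^0,(\varepsilon_n))=\max\{d(r,c)-2\varepsilon_0,0\}\ge d(r,c)-2\varepsilon_0=\Val(r,c,0,\tau^0)-2\delta_0$, and $\ValR(r,c,0,\tau^0,(\varepsilon_n))=d(r,c)=\Val(r,c,0,\tau^0)$, where $\delta_{-1}$ is read as the empty sum $0$.

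For the inductive step one records that the shifted perturbation sequence $(\varepsilon'_n)_{n\ge0}$ with $\varepsilon'_n=\varepsilon_{n+1}$ has partial sums $\delta'_n=\delta_{n+1}-\varepsilon_0$. Plugging the induction hypothesis into (\ref{eq:def_value_volatile_game}) gives
$$\ValC(r,c,N,\tau^N,(\varepsilon_n))\ \ge\ \min_{r',c'}\ \max_{r''}\ \min_{c''}\ \Val(r'',c'',N-1,\tau_1^{N-1})\ -\ 2(\delta_N-\varepsilon_0),$$
so it remains to show that the triple extremum on the right is at least $\Val(r,c,N,\tau^N)-2\varepsilon_0$; the $\ValR$ estimate reduces in the same way to bounding the analogous $\max_{r',c'}\max_{r''}\min_{c''}\Val(r'',c'',N-1,\tau_1^{N-1})$ by $\Val(r,c,N,\tau^N)+2\varepsilon_0$. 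The key geometric device for both is a projection: if a position is perturbed by at most $\varepsilon_0$ and then a player moves by at most $\tau(1)$, the resulting point lies within $\varepsilon_0$ of a point reachable from the \emph{unperturbed} position by a legal move of length at most $\tau(1)$ — since $X$ is geodesic, one simply walks along a geodesic toward the target, stopping after distance $\min\{\tau(1),\cdot\}$, and the triangle inequality bounds the leftover by $\varepsilon_0$.

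Concretely, for the $\ValC$ bound let $r^*$ be the robber's optimal move in $\Gamma(r,c,N,\tau^N)$, so $d(r,r^*)\le\tau(1)$ and $\Val(r,c,N,\tau^N)=\min_{\tilde c}\Val(r^*,\tilde c,N-1,\tau_1^{N-1})$ over cop moves with $d(c_i,\tilde c_i)\le\tau(1)$. Given any adversary perturbation $(r',c')$, the robber moves from $r'$ toward $r^*$, landing at $r''$ with $d(r',r'')\le\tau(1)$ and $d(r'',r^*)\le\varepsilon_0$; given any cop response $c''$ with $d(c'_i,c''_i)\le\tau(1)$, project each $c''_i$ back along a geodesic from $c_i$ to obtain $c^*_i$ with $d(c_i,c^*_i)\le\tau(1)$ and $d(c^*_i,c''_i)\le\varepsilon_0$. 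Then $d((r'',c''),(r^*,c^*))\le\varepsilon_0$, so Lemma \ref{lem:value_is_continuous} yields $\Val(r'',c'',N-1,\tau_1^{N-1})\ge\Val(r^*,c^*,N-1,\tau_1^{N-1})-2\varepsilon_0\ge\Val(r,c,N,\tau^N)-2\varepsilon_0$, and taking $\min_{c''}$, then $\max_{r''}$, then $\min_{r',c'}$ completes the inequality. The $\ValR$ bound is the mirror image: fix an arbitrary adversary perturbation and robber move $r''$, project $r''$ back to a legal unperturbed move $r^*$ with $d(r^*,r'')\le\varepsilon_0$, let $c^*$ be the cops' optimal reply to $r^*$ (so $\Val(r^*,c^*,N-1,\tau_1^{N-1})\le\Val(r,c,N,\tau^N)$), and project $c^*$ forward to a legal cop response $c''$ in the volatile game with $d(c''_i,c^*_i)\le\varepsilon_0$; Lemma \ref{lem:value_is_continuous} again bounds the discrepancy by $2\varepsilon_0$, and combining with the induction hypothesis (whose error is $2\delta'_{N-2}=2(\delta_{N-1}-\varepsilon_0)$) gives the bound $\Val(r,c,N,\tau^N)+2\delta_{N-1}$.

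The main difficulty I anticipate is purely bookkeeping: keeping straight which quantity is perturbed by $\varepsilon_0$ versus moved by $\tau(1)$, respecting the order in which the adversary and the two players act in each recursion, and aligning the shifted index so that $2(\delta_N-\varepsilon_0)+2\varepsilon_0=2\delta_N$ (resp.\ $2(\delta_{N-1}-\varepsilon_0)+2\varepsilon_0=2\delta_{N-1}$) comes out exactly, with the degenerate cases $N=0,1$ (where $\delta_{-1}=0$) needing a one-line check. A secondary point worth a remark is that the volatile recursions, like (\ref{eq:def_value_finite_game}), only record the terminal distance; but since the projection/continuity argument only ever compares terminal-distance values $\Val(\cdot,\cdot,N-1,\tau_1^{N-1})$, no analogue of Lemma \ref{lem:L1} is actually needed here.
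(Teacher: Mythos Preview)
Your proof is correct and follows essentially the same idea as the paper: have the relevant player mimic his optimal strategy from the unperturbed game and bound how far the perturbations can push the resulting positions away from the unperturbed gameplay. The paper phrases this as tracking cumulative position drift (after $n$ steps the players are within $\delta_n$ of their unperturbed positions, hence the distance differs by at most $2\delta_n$), while you unfold the recursions (\ref{eq:def_value_volatile_game})--(\ref{eq:def_value_volatile_game_cops}) and invoke the continuity estimate of Lemma~\ref{lem:value_is_continuous} at each step; the geodesic ``projection'' you describe is exactly the mimicking step in the paper's argument, so the two presentations are equivalent, yours being the more formal one.
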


\begin{proof}
  The proof is not complicated, but requires thoughtful description. We will only show the first inequality. Here it suffices to describe a strategy of the robber such that no matter how the cops play, the final result will in the worst case be close to the value $\Val(r,c,N,\tau^N)$ of the finite game.

  Let $s_0$ be the optimal strategy for the game $\Gamma(r,c,N,\tau^N)$. It suffices to prove by induction on $N$ that the robber can stay ``close'' to the gameplay in the unperturbed game in which he uses strategy $s_0$ and the cops use some strategy $q$ since in that gameplay the distance between the cops and the robber will always be at least $\Val(r,c,N,\tau^N)$. By ``close'' we mean that his position and the position of each cop after $n$ steps is at most $\delta_n$ away from a position in the usual gameplay with the robber using strategy $s_0$.

  When $N=0$, this is clear. If $N>0$, use the induction hypothesis to conclude that after $N-1$ steps, we are $\delta_{N-1}$-close to a position $(r^{N-1},c^{N-1})$ of the unperturbed gameplay. By mimicing the last step of the game, the players can come to within the distance $\delta_{N-1}$ from the position $(r^N,c^N)$ in the optimal gameplay. Finally, the adversary can move each player up to distance $\varepsilon_N$ further away from that position. Since $\delta_{N-1}+\varepsilon_N=\delta_N$, players could be at most $2\delta_N$ closer than in the optimal gameplay performed by the robber.
  This completes the proof.
\end{proof}

\section{Limits of strategies}

The somewhat technical definition of our Cops and Robber games using agility functions enables us to prove that each game $\Gamma(r,c)$ or $\Gamma(r,c,\tau)$ can be approximated by finite discrete games of the form $\Gamma(r,c,T_i,\tau)$, where $T_i$ is an increasing sequence of positive real numbers tending to infinity. When $\tau$ is fixed, we can equivalently approximate $\Gamma(r,c,\tau)$ with finite-step games $\Gamma(r,c,N_i,\tau^{N_i})$, where $0<N_1<N_2<N_3<\cdots$ is a sequence of positive integers tending to infinity.
However, strategies are not continuous functions, so there is an obvious question if one can define a limiting strategy, knowing optimal strategies for finite games that are $\varepsilon$-approximations with $\varepsilon\to 0$. This question is our goal in this section. Answering what are the limits of strategies will also yield our final min-max theorem.

Suppose that $\tau$ is an agility that is decreasing. Let $0<T_1<T_2<\cdots$ be an increasing sequence tending to infinity. For each $i$, let $N_i$ be the smallest integer such that $T(N_i) := \sum_{n=1}^{N_i} \tau(n) \ge T_i$.
If we start instead with an increasing sequence of positive integers tending to infinity, $N_1<N_2<N_3<\cdots$, we can define $T_i=T(N_i)$ and then proceed as we would with the sequence $(T_i)_{i\ge1}$.
We will also use, with a fixed value of $\varepsilon$, the following decreasing sequence: $\varepsilon_i := 2^{-i-1}\varepsilon$ ($i\ge0$). Note that $\sum_{i\ge0} \varepsilon_i = \varepsilon$.

Let us recall that by Corollary \ref{cor:approx_allRC}, for every $\varepsilon>0$ and every agility $\tau$, there is $T=T(\varepsilon)$ such that $\Gamma(r,c,\tau)$ is $\varepsilon$-approximated with the finite game $\Gamma(r,c,T,\tau)$ for every initial position $(r,c)$. Let us now choose $T_i$ ($i\ge 1$) so that $\Gamma(r,c,\tau)$ is $\varepsilon_i$-approximated with the finite game $\Gamma(r,c,T_i,\tau) = \Gamma(r,c,N_i,\tau^{N_i})$. For each $i\ge1$, let $s_i$ and $q_i$ be optimal strategies of the robber and the cops (respectively) for the finite game $\Gamma(r,c,N_i,\tau^{N_i})$.

We now define the notion of an $\varepsilon$-approximate limit of strategies $(s_i)_{i\ge1}$ and $(q_i)_{i\ge1}$ for the game $\Gamma(r,c,\tau)$. Since $\tau$ is decreasing, we can define the limiting strategies depending only on the current step length $\tau(n)$, and this is defined first for $n=1$, then for $n=2$, etc. Suppose that strategies have been defined for steps $1,2,\dots,n-1$. Consider the next step, whose length is $t=\tau(n)$. Consider a finite point-set $Y^n\subset X^{k+1}$ such that every point in $X^{k+1}$ is at distance less than $\varepsilon_n$ from $Y^n$. For every position $(r^n,c^n)\in Y^n$, each strategy $s_i$ ($i\ge1$) gives the new position $r_i^n$ of the robber that is at most $t$ away from $r^n$. Since $X$ is compact, the sequence $(r_i^n)_{i\ge1}$ contains a convergent subsequence with limit $r'$, and $r'$ is still at most $t$ away from $r^n$. We say that this subsequence is \emph{appropriate} for $(r^n,c^n)$ if, moreover, $d(r_i^n,r') < \varepsilon_n$ for each $i\ge1$. Since $Y^n$ is finite, there is a subsequence $j_1^n < j_2^n < j_3^n < \dots$ of indices that is appropriate for all points in $Y^n$. We will from now on, in all succeeding steps, only consider this subsequence and its subsequences; so in particular, if $n>1$, then $(j_i^n)_{i\ge1}$ is a subsequence of $(j_i^{n-1})_{i\ge1}$. The corresponding lengths of the finite games will also be denoted using superscripts $n$, $N_i^n := N_{j_i^n}$ ($i\ge1$). The assignment $s_0: (r^n,c^n,t)\mapsto r'$ then defines the strategy of the robber in step $n$ restricted to the positions in $Y^n$.

If a position $(r^n,c^n)$ is not in $Y^n$, then we find the closest position, say $(a,b)\in Y^n$, and consider its strategy limit point $r'=s_0(a,b,t)$. Now, we first move the robber from $r^n$ to $a$ and then proceed towards $r'$ using a geodesic from $a$ to $r'$. Of course, the total length we can make is $t$, and we may need to stop before reaching $r'$. Let $r''$ be the point reached in this way, and we set $s_0: (r^n,c^n,t)\mapsto r''$.
Observe that we also have $d(r'',r')\le \varepsilon_n$. This defines the strategy of the robber for the $n$th step.

Similarly we define a strategy $q_0:(r,c,t)\mapsto c')$ for the cops in the $n$th step of length $t$, again passing to a subsequence $(N_i^n)$ of $(N_i^{n-1})$. We may assume that the same subsequence that was appropriate for all $(r^n,c^n)\in Y^n$ for the robber is also appropriate with respect to the cops' positions, so we do not need to introduce new notation.

By repeating the above process for each $n\ge 1$, we end up with strategies $s_0$ and $q_0$ for the game $\Gamma(\tau)$, where $\tau$ is a fixed decreasing agility. Note that there is some freedom in the process by taking different converging subsequences, so these need not be unique. Any strategies $s_0$ and $q_0$ obtained in this way are said to be \emph{$\varepsilon$-approximate limits of strategies} $(s_i)_{i\ge1}$ and $(q_i)_{i\ge1}$ for the finite games $(\Gamma(r,c,N_i,\tau^{N_i}))_{i\ge1}$.

\begin{lemma}\label{lem:approximate limits approximate the value}
   Let $\tau$ be a decreasing agility function and $(r^0,c^0)$ be a fixed initial position for the game of cops and robber on $X$. Suppose that $s_0$ and $q_0$ are $\varepsilon$-approximate limits of optimal strategies for finite games $\Gamma(r^0,c^0,N_i,\tau^{N_i})$ with $N_i\to \infty$ as $i\to\infty$. Then for arbitrary strategies, $s$ of the robber and $q$ of the cops, for the game $\Gamma(r^0,c^0,\tau)$, we have
   $$
      v_\tau(s,q_0) - 4\varepsilon < v_\tau(s_0,q_0) < v_\tau(s_0,q) + 4\varepsilon.
   $$
\end{lemma}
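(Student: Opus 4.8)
The plan is to split the two-sided estimate into two one-sided guarantees and then chain them through the single quantity $V := \Val(r^0,c^0,\tau)$. Concretely, I will establish

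\emph{(A)} for every cops' strategy $q$, \ $v_\tau(s_0,q) \ge V - 2\varepsilon$; \quad and \quad \emph{(B)} for every robber's strategy $s$, \ $v_\tau(s,q_0) \le V + 2\varepsilon$.

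Estimate (A) says the limiting robber strategy $s_0$ is (nearly) value-guaranteeing — it is a \emph{lower} bound on what the robber keeps against an arbitrary $q$ — and (B) says the limiting cops' strategy $q_0$ is (nearly) value-forcing. Note the direction carefully: the right inequality of the lemma, $v_\tau(s_0,q_0) < v_\tau(s_0,q)+4\varepsilon$, is a \emph{lower} bound on $v_\tau(s_0,q)$ and must be obtained from (A) applied to $q$ together with (B) applied to $s=s_0$, not the reverse. Once (A) and (B) are in hand, both inequalities are immediate: using (B) with $s=s_0$ gives $v_\tau(s_0,q_0)\le V+2\varepsilon$, while (A) gives $V\le v_\tau(s_0,q)+2\varepsilon$, so $v_\tau(s_0,q_0)\le v_\tau(s_0,q)+4\varepsilon$; symmetrically, (B) with the given $s$ and (A) with $q=q_0$ yield $v_\tau(s,q_0)\le V+2\varepsilon\le v_\tau(s_0,q_0)+4\varepsilon$. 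The factor $4\varepsilon$ is exactly the cost of passing through $V$ twice, each pass contributing $2\varepsilon$.

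\textbf{Proving (A) via the volatile game.} I would interpret a gameplay in which the robber uses $s_0$ and the cops use $q$, truncated to $N=N_i$ steps, as a volatile game $\Gamma(r^0,c^0,N_i,\tau^{N_i},(\varepsilon_n)_{n\ge0})$ in which the robber uses the finite-optimal strategy while the adversary absorbs the per-step discrepancy between $s_0$ and the optimal finite strategy. By construction of the $\varepsilon$-approximate limit, at step $n$ the move prescribed by $s_0$ differs from the optimal finite move by at most $\varepsilon_n$ (the limit-convergence error for net points in $Y^n$, and the snapping-plus-geodesic error $d(r'',r')\le\varepsilon_n$ for off-net points). Since $\varepsilon_n = 2^{-n-1}\varepsilon$, the accumulated perturbation satisfies $\delta_N=\sum_{n=0}^N\varepsilon_n\le\varepsilon$. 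The first inequality of Lemma~\ref{lem:value volatile game} then gives $v_\tau(s_0,q)\big|_{N_i\text{ steps}}\ge \Val(r^0,c^0,N_i,\tau^{N_i})-2\delta_{N_i}\ge \Val(r^0,c^0,N_i,\tau^{N_i})-2\varepsilon$, and since the finite value dominates the infinite one (Lemma~\ref{lem:value decreases with more steps}), $\Val(r^0,c^0,N_i,\tau^{N_i})\ge V$. Because $v_\tau$ is an infimum over all steps and the $N_i$-step bound is uniform in $i$, letting $i\to\infty$ yields (A).

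\textbf{Proving (B) and the obstacle.} Estimate (B) is the mirror image: playing any $s$ against $q_0$ over $N_i$ steps is a volatile game in which the cops use their finite-optimal strategy, so the second inequality of Lemma~\ref{lem:value volatile game} gives $v_\tau(s,q_0)\le d(r^{N_i},c^{N_i})\le \Val(r^0,c^0,N_i,\tau^{N_i})+2\delta_{N_i-1}$, and since the finite game $\varepsilon_i$-approximates $\Gamma(r^0,c^0,\tau)$ we have $\Val(r^0,c^0,N_i,\tau^{N_i})<V+\varepsilon_i$; letting $i\to\infty$ (so $\varepsilon_i\to0$) gives $v_\tau(s,q_0)\le V+2\varepsilon$. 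All the bounds above are in fact strict for finite $N_i$ (the geometric tail keeps $\delta_N<\varepsilon$ and $\varepsilon_i>0$), so the chained estimates yield strict $<4\varepsilon$ as stated. The main obstacle is making the volatile-game correspondence rigorous: because strategies are discontinuous, one cannot simply compare $s_0$ evaluated at the true position with the finite-optimal strategy evaluated at a drifted position. The delicate point is to verify that the net $Y^n$ together with the appropriate-subsequence convergence lets the adversary's single $\varepsilon_n$-perturbation per step account for \emph{both} the snapping of the true position onto $Y^n$ \emph{and} the replacement of the finite move by its limit, so that the cumulative drift from an honest finite-optimal gameplay stays within $\delta_n\le\varepsilon$ and Lemma~\ref{lem:value volatile game} applies with perturbation exactly $(\varepsilon_n)_{n\ge0}$.
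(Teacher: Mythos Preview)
Your volatile-game reduction is exactly the mechanism the paper uses, but the accounting in your step (A) (and symmetrically (B)) has a genuine gap that you yourself flag as ``the obstacle'' without actually resolving it. You claim that at step $n$ the move prescribed by $s_0$ differs from the finite-optimal move by at most $\varepsilon_n$. This is only correct when the current position lies in $Y^n$: there $s_0$ outputs the limit $r'$ and the finite strategy outputs $r_i^n$ with $d(r_i^n,r')<\varepsilon_n$. Off the net, however, the finite-optimal strategy evaluated at the \emph{actual} position $(r^{n-1},c^{n-1})$ is completely uncontrolled (strategies are discontinuous), so there is no $\varepsilon_n$-bound available. The only way to regain control is to let the adversary first snap the position to the nearest point of $Y^n$ (cost $\le\varepsilon_n$) and \emph{then} replace the finite move by its limit (another $\le\varepsilon_n$). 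That makes the per-step perturbation $2\varepsilon_n$, not $\varepsilon_n$, and hence $\delta_N<2\varepsilon$, so Lemma~\ref{lem:value volatile game} yields $4\varepsilon$ in place of your $2\varepsilon$ in (A) and (B). Chaining through $V$ then gives $8\varepsilon$, not $4\varepsilon$.

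The paper's proof uses the same volatile-game comparison but with the honest $2\varepsilon_n$ perturbation, and it does \emph{not} chain through $V=\Val(r^0,c^0,\tau)$: it observes that for \emph{every} cops' strategy $q$ (in particular for $q=q_0$), the $(s_0,q)$-gameplay restricted to $N_i^n$ steps is, after the adversary's two $\varepsilon_n$-moves per step, a realisation of the volatile game in which the robber plays the finite-optimal strategy, so its value lies within $2\delta_{N_i^n}<4\varepsilon$ of $\Val(r^0,c^0,N_i^n,\tau^{N_i^n})$. Comparing $q$ and $q_0$ against this single finite-game value gives the second inequality directly with constant $4\varepsilon$; the first inequality is symmetric using the cops' limit $q_0$. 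In short: your decomposition into (A)/(B) plus chaining would be a legitimate alternative route, but only to a weaker $8\varepsilon$ conclusion; to recover $4\varepsilon$ you must drop the intermediate passage through $V$ and use the $2\varepsilon_n$-volatile estimate once, as the paper does.
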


\begin{proof}
  We will give details only for the second inequality. The first one can be proved in exactly the same way, using estimates on the positions of each of the cops.

  Suppose that the players start with position $(r^0,c^0)$ and then play the game using agility $\tau$, where the robber uses the limit strategy $s_0$ and the cops use any strategy $q$. When defining $s_0$ as the $\varepsilon$-approximate limit strategy in the $n$th step, we have considered a finite point set $Y^n$ such that each point in $X^{k+1}$ was at distance less than $\varepsilon_n$ from $Y^n$, and we considered a subsequence $(N_i^n)_{i\ge1}$ of the (sub)sequence $(N_i^{n-1})_{i\ge1}$ from the previous step. For $N_i^n$, the optimal strategies of the finite games $\Gamma(r^0,c^0,N_i^n,\tau^{N_i^n})$ converge for each $(r,c,t)$ with $(r,c)\in Y^n$ and $t=\tau(n)$ to the point $r'$ which we have used when defining the strategy $s_0$.

  We can interpret the gameplay as a volatile game, where the adversary first moves the robber from where he would have ended using his optimal strategy to the limiting value $r'$. By our choice of subsequences, this moves the robber at most $\varepsilon_n$ away in the $n$th step. Next, the adversary moves both, the robber and the cops, each at most $\varepsilon_n$ away, by placing them to the nearest position $(a,b)\in Y^n$. Thus, the first $n$ steps of the gameplay of \emph{each} finite game $\Gamma(r^0,c^0,N_i^n,\tau^{N_i^n})$ ($i\ge1$) corresponds to a volatile game with perturbations $(2\varepsilon_n)_{n\ge1}$. By Lemma \ref{lem:value volatile game}, the value $v_\tau(s_0,q)$ of this gameplay is within $2\delta_{N_i^n}$ from $\Val(r^0,c^0,N_i^N,\tau^{N_i^n})$, where $\delta_n = \sum_{j=1}^{n} 2\varepsilon_j < 2\varepsilon$. This completes the proof.
\end{proof}

Our final results are now immediate consequences of Lemma \ref{lem:approximate limits approximate the value} when combined with the definitions of $\ValC(\tau)$ and $\ValR(\tau)$.
The lemma gives the min-max theorem for $\Gamma(r,c,\tau)$, where $(r,c)$ is any initial position. If the approximate strategies for different initial values in the same step $n$ were different, we can unify them so that they will be the same. Having done this, we may assume that we have $\varepsilon$-approximate strategies for all initial positions, forming the $\varepsilon$-approximate strategy for the game $\Gamma(\tau)$. This gives the following result.

\begin{corollary}
  Suppose that $\tau$ is a decreasing agility function and that $s_0$ and $q_0$ are $\varepsilon$-approximate limits of optimal strategies for finite games $\Gamma(N_i,\tau)$ with $N_i\to \infty$ as $i\to\infty$. Then
  $$\ValR(\tau)-\varepsilon \le v_\tau(s_0,q_0)\le \ValC(\tau) + \varepsilon.$$
\end{corollary}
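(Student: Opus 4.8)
The plan is to derive the corollary directly from Lemma~\ref{lem:approximate limits approximate the value} by taking suprema and infima over strategies, and then invoking the definitions of $\ValR(\tau)$ and $\ValC(\tau)$. The only new ingredient beyond the lemma is the remark that the per–initial–position approximate strategies can be unified into a single pair $(s_0,q_0)$ defined on all of $X^{k+1}$; once this is granted, the estimate is essentially bookkeeping.

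First I would fix the decreasing agility $\tau$ and the unified $\varepsilon$-approximate limit strategies $s_0,q_0$ for the game $\Gamma(\tau)$, obtained by running the construction of Section~7 simultaneously for all initial positions (using in step $n$ a single finite net $Y^n$ and a single subsequence of indices appropriate for every point of $Y^n$, exactly as in the paragraph preceding the corollary). For every initial position $(r,c)$, Lemma~\ref{lem:approximate limits approximate the value} then applies with $(r^0,c^0)=(r,c)$, giving $v_\tau(s,q_0)-4\varepsilon < v_\tau(s_0,q_0) < v_\tau(s_0,q)+4\varepsilon$ for all strategies $s,q$ of the two players.

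Next I would take the appropriate extrema. Fixing $q_0$ and taking the supremum over robber strategies $s$ in the left inequality yields $\sup_s v_\tau(s,q_0) \le v_\tau(s_0,q_0)+4\varepsilon$, and since $\ValR(\tau) = \inf_q \sup_s v_\tau(s,q) \le \sup_s v_\tau(s,q_0)$, we get $\ValR(\tau) \le v_\tau(s_0,q_0)+4\varepsilon$. Symmetrically, fixing $s_0$ and taking the infimum over cop strategies $q$ in the right inequality gives $v_\tau(s_0,q_0) \le \inf_q v_\tau(s_0,q)+4\varepsilon \le \sup_s \inf_q v_\tau(s,q)+4\varepsilon = \ValC(\tau)+4\varepsilon$. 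Combining, $\ValR(\tau)-4\varepsilon \le v_\tau(s_0,q_0) \le \ValC(\tau)+4\varepsilon$, which is the claimed chain of inequalities (with a constant $4\varepsilon$ rather than $\varepsilon$; since $\varepsilon$ is an arbitrary positive parameter of the construction one may simply rescale, replacing $\varepsilon$ by $\varepsilon/4$ throughout the construction of the approximate limits, to obtain the stated form).

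The main obstacle I anticipate is the unification step: one must check that the construction of the $\varepsilon$-approximate limit strategy can indeed be carried out uniformly in the initial position, i.e.\ that the finite nets $Y^n$ and the diagonal subsequence of game lengths $(N_i^n)$ do not depend on $(r^0,c^0)$. This is true because the nets $Y^n \subseteq X^{k+1}$ and the compactness argument producing the convergent subsequences are already stated for all positions in $X^{k+1}$ at once in Section~7, and Corollary~\ref{cor:approx_allRC} provides, for each $\varepsilon_i$, a single finite game $\Gamma(N_i,\tau^{N_i})$ that $\varepsilon_i$-approximates every initial position; hence the whole construction is position-independent, and Lemma~\ref{lem:approximate limits approximate the value} holds with one fixed pair $(s_0,q_0)$ for every $(r,c)$. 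Everything else is a routine manipulation of suprema and infima, so I would keep the written proof short, essentially a one-paragraph deduction referencing Lemma~\ref{lem:approximate limits approximate the value} and the definitions of $\ValR(\tau)$ and $\ValC(\tau)$.
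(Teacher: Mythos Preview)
Your proposal is correct and follows exactly the route the paper indicates: the paper does not give a separate proof of this corollary but says it is an ``immediate consequence'' of Lemma~\ref{lem:approximate limits approximate the value} together with the definitions of $\ValC(\tau)$ and $\ValR(\tau)$, after unifying the per--initial-position strategies into one pair $(s_0,q_0)$ for $\Gamma(\tau)$. Your observation about the constant $4\varepsilon$ versus $\varepsilon$ and the fix by rescaling is also appropriate, since the paper's Lemma~\ref{lem:approximate limits approximate the value} indeed carries the factor $4$.
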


Finally, taking $\varepsilon \to 0$, we get a min-max result.

\begin{theorem}\label{thm:ValC=ValR}
  For every decreasing agility $\tau$ we have $\ValC(\tau)=\ValR(\tau)$. Consequently, $\ValC=\ValR$.
\end{theorem}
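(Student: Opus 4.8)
The plan is to read off the theorem directly from the Corollary that immediately precedes it, together with the definitions of $\ValC(\tau)$ and $\ValR(\tau)$. Fix a decreasing agility $\tau$. For each $\varepsilon>0$, the Corollary supplies strategies $s_0,q_0$ (the $\varepsilon$-approximate limits of optimal finite-game strategies) with
$$
   \ValR(\tau)-\varepsilon \;\le\; v_\tau(s_0,q_0)\;\le\; \ValC(\tau)+\varepsilon.
$$
In particular $\ValR(\tau)-\varepsilon \le \ValC(\tau)+\varepsilon$, i.e. $\ValR(\tau) \le \ValC(\tau)+2\varepsilon$ for every $\varepsilon>0$, hence $\ValR(\tau)\le\ValC(\tau)$. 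Combined with the inequality $\ValC(\tau)\le\ValR(\tau)$ already established in Section~3 (from the fact that for each $\varepsilon$ there is a cop strategy $q$ with $v_\tau(s,q)<\ValR(\tau)+\varepsilon$ for all $s$), this gives $\ValC(\tau)=\ValR(\tau)$.

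For the ``consequently'' clause I would take the supremum over $\tau$. Recall that by Lemma~\ref{lem:subdivide tau} (and its strategy-level refinement Lemma~\ref{lem:subdivide strategies}), subdividing an agility only increases the value in the robber's favour, and every agility $\tau$ has a decreasing subdivision $\tau'\preceq\tau$ with $\ValR(\tau)\le\ValR(\tau')$ and $\ValC(\tau)\le\ValC(\tau')$. Hence the suprema defining $\ValR$ and $\ValC$ are unchanged if restricted to decreasing agility functions:
$$
   \ValR \;=\; \sup_{\tau \text{ decreasing}} \ValR(\tau), \qquad
   \ValC \;=\; \sup_{\tau \text{ decreasing}} \ValC(\tau).
$$
Since $\ValC(\tau)=\ValR(\tau)$ for every decreasing $\tau$ by the first part, the two suprema coincide, so $\ValC=\ValR$.

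I do not expect a genuine obstacle here: all the work is already done in Lemma~\ref{lem:approximate limits approximate the value} and the Corollary, and what remains is bookkeeping with the definitions. The one point to state carefully is the reduction to decreasing agility in the second half — one must invoke the monotonicity under subdivision rather than claim an optimal $\tau$ exists (there need not be one, as the text emphasises that the set of agility functions is not compact). Everything else is a two-line squeeze argument letting $\varepsilon\to 0$.
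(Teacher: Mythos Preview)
Your proposal is correct and follows exactly the approach the paper intends: the paper's own ``proof'' is literally the one line ``Finally, taking $\varepsilon\to 0$, we get a min-max result,'' and your squeeze argument from the Corollary, together with the already-established inequality $\ValC(\tau)\le\ValR(\tau)$, is precisely what that line means. Your handling of the ``consequently'' clause via reduction to decreasing agility through subdivision (Lemmas~\ref{lem:subdivide tau} and~\ref{lem:subdivide strategies}) is the right justification and is more carefully stated than what the paper makes explicit.
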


The above ``min-max theorem" implies that for every $\varepsilon>0$ there are $\varepsilon$-approximating strategies for both players, and if either one of them uses his strategy, the other player cannot do more than $\varepsilon$ better than just using his own $\varepsilon$-approximating strategy.

\bibliographystyle{plain}
\bibliography{references_CRsurfaces}

\end{document}